\numberwithin{equation}{section}
\newcommand{\e}{\varepsilon}
\newcommand{\R}{\mathbb R}
\newcommand{\Z}{\mathbb Z}
\newcommand{\N}{{\mathbb N}}
\newcommand{\dv}{{\rm{div}}}
\newcommand{\wtts}{\overset{2,2}{\rightharpoonup}}
\newcommand{\stts}{\overset{2,2}{\to}}
\newcommand{\mfs}{\mathcal{M}}
\newcommand{\aop}{\mathcal{U}_\varepsilon}
\newcommand{\anop}{\mathcal{U}_{\varepsilon_n}}
\newcommand{\ufop}{\mathcal{T}_\varepsilon}
\newcommand{\ufnop}{\mathcal{T}_{\varepsilon_n}}
\newcommand{\vep}{\varepsilon}
\newcommand{\mx}{\tfrac{x}{\varepsilon}}
\newcommand{\mnx}{\tfrac{x}{\varepsilon_n}}
\newcommand{\mt}{\tfrac{t}{\varepsilon^r}}
\newcommand{\mnt}{\tfrac{t}{\varepsilon_n^r}}
\newcommand{\lam}{\lambda}
\newtheorem{thm}{Theorem}[section]
\newtheorem{lem}[thm]{Lemma}
\newtheorem{rmk}[thm]{Remark}
\newtheorem{prop}[thm]{Proposition}
\newtheorem{defi}[thm]{Definition}
\newtheorem{cor}[thm]{Corollary}
\title{Corrector results for space-time homogenization of nonlinear diffusion}
\author{Tomoyuki Oka}
\address[Tomoyuki Oka]{Graduate School of Science, Tohoku University, Sendai 980-8579 Japan}
\email{tomoyuki.oka.q3@dc.tohoku.ac.jp}
\date{\today}
\keywords{space-time homogenization, corrector result, two-scale convergence, unfolding method, fast diffusion equation, porous medium equation, nonlinear diffusion equation}
\begin{document}

\subjclass[2010]{\emph{Primary}: 35B27; \emph{Secondary}: 80M40, 47J35}

\maketitle

\begin{abstract}
  The present paper concerns a space-time homogenization problem for nonlinear diffusion equations with periodically oscillating (in space and time) coefficients.
  Main results consist of corrector results (i.e., strong convergences of solutions with corrector terms) for gradients, diffusion fluxes and time-derivatives without assumptions for smoothness of coefficients.
  Proofs of the main results are based on the space-time version of the unfolding method, which is deeply concerned with the strong two-scale convergence theory.
\end{abstract}

\section{Introduction and main results}

\emph{Homogenization} is known as a method of asymptotic analysis for complex structures and systems.
Actually, it is often used to replace heterogeneous materials with a large number of microstructures, such as composite materials, with an equivalent homogeneous material; for instance, it is applied to models of heat conduction in composite materials.
Such models are often described as linear diffusion equations, and then their \emph{space-time homogenization} oscillating both in space and time has been investigated in various mathematical fields.

Space-time homogenization problems for linear diffusion equations are first studied by Bensoussan, Lions and Papanicolaou in \cite{ben} based on a method of \emph{asymptotic expansion}, and then various methods have been developed (see e.g.,~\cite{Ho} for \emph{two-scale convergence theory} and \cite{AAB17} for \emph{unfolding method}).
Furthermore, homogenization problems for parabolic equations have been studied not only for linear ones but also for nonlinear ones. 
In \cite{Ji00, NanRa01,Vis07}, doubly-nonlinear parabolic equations are treated, and moreover, as for degenerate $p$-Laplace parabolic equations, homogenization problems involving scale parameters (e.g., $r>0$ of $\mathrm{div}[A(\tfrac{x}{\e},\tfrac{t}{\e^r},\nabla u_\e)]$) are discussed in \cite{EfPa05, Wo10}.
On the other hand, \emph{porous medium types} are also treated in \cite{AO1, AO2}, and in particular, it is revealed that the difference between degeneracy and singularity of diffusion is deeply related to the representation of the so-called \emph{homogenized matrices} (see \cite[(ii) of Remark 1.5]{AO1}).

Throughout this paper, let $\Omega$ be a bounded domain of $\R^N$ with smooth boundary $\partial \Omega$, $a : \R^N \times \R_+ \to \R^{N\times N}$ be an $N \times N$ symmetric matrix field such that $a(y,s)$ is uniformly elliptic at each $(y,s) \in \R^N \times \R_+$, i.e., there exists $\lambda>0$ such that
\begin{equation}\label{eq:ellip}
 \lam|\xi|^2 \leq a(y,s) \xi \cdot \xi \leq |\xi|^2 \quad \mbox{ for all } \ \xi \in \R^N.
\end{equation}
Let us consider the following Cauchy-Dirichlet problem\/{\rm :}
\begin{equation}\label{eq:P}
  \left\{
    \begin{aligned}
    \partial_tu_{\e}&=\dv\left( a_\e\nabla |u_{\e}|^{p-1}u_{\e} \right)+f_{\e}
     &&\text{ in } \Omega\times (0,T), \\
    |u_{\e}|^{p-1}u_{\e}&=0 &&\text{ on } \partial\Omega\times (0,T), \\
    u_{\e}&=u^0  &&\text{ in } \Omega \times \{0\},
    \end{aligned}
    \right.
\end{equation}
where $\partial_t=\partial/\partial_t$, $u^0 \in H^{-1}(\Omega)$ and $f_\vep : (0,T) \to H^{-1}(\Omega)$ are given, $\e>0$, $a_\e=a(\mx,\mt)$ and $0<p, r<+\infty$.
Then \emph{weak solutions} $u_\e = u_\e(x,t) : \Omega \times (0,T) \to \R$ of \eqref{eq:P} are defined as follows.
\begin{defi}[Weak solution of \eqref{eq:P}]
A function $u_\e = u_\e(x,t) : \Omega \times (0,T) \to \R$ is called a \emph{weak solution} to \eqref{eq:P}, if the following conditions are all satisfied\/{\rm:}
\begin{itemize}
\rm
 \item[(i)] $u_{\e}\in W^{1,2}(0,T;H^{-1}(\Omega)) \cap L^{p+1}(\Omega\times(0,T))$, $|u_\vep|^{p-1}u_\e \in L^{2}(0,T;H^{1}_0(\Omega))$ and $u_{\e}(t)\to u^0$ strongly in $H^{-1}(\Omega)$ as $t\to 0_+$,
 \item[(ii)] it holds that
\begin{equation*}
\left\langle \partial_t u_{\e}(t),w\right\rangle_{H^1_0(\Omega)} + \int_{\Omega}a_\e\nabla (|u_\e|^{p-1}u_\e)(x,t)\cdot \nabla w(x)\ dx =\langle f_{\e}(t),w\rangle_{H^{1}_0(\Omega)}
\end{equation*}
for a.e.~$t\in (0,T)$ and all $w\in H^1_0(\Omega)$.
\end{itemize}
\end{defi}

\begin{rmk}
\rm
Nonlinear diffusion equations \eqref{eq:P} are
called \emph{porous medium equation} (PME) if $1<p<+\infty$ and \emph{fast diffusion equation} (FDE) if $0<p<1$
(see \cite{Va1,Va2} for details).
The well-posedness for \eqref{eq:P} can be obtained by \cite{Ak,AO1}.
\end{rmk}

We first recall the following fundamental result, which is obtained in \cite[Theorem 1.3]{AO1}.
\begin{thm}[Homogenization theorem for \eqref{eq:P}]\label{T:1}
Let $0 < p, r < +\infty$ and let $\e_n \to 0_+$ be an arbitrary sequence in $(0,+\infty)$.
In addition, suppose that
\begin{itemize}
\item $u^0\in L^2(\Omega) \cap L^{p+1}(\Omega)$,
\item $ f_{\e} \in W^{1,2}( 0,T ;H^{-1}(\Omega)) \cap L^1( 0,T;L^2(\Omega))$,
 \item $f_{\e_n} \to f$ weakly in $L^2( 0,T;H^{-1}(\Omega))$,
 \item $(f_{\e_n})$ is bounded in $L^1( 0,T;L^2(\Omega))$ if $p\in(0,1)$,
 \item $a\in [W^{1,1}(\R_+;L^{\infty}(\R^N))]^{N\times N}$ is $(\square \times J)$-periodic, i.e.,
 $$
 a(y+e_k, s+1)=a(y,s)\quad \text{ for a.e.~} (y,s)\in \R^N\times \R_+.
 $$
\end{itemize}
Here and henceforth, $\square:=(0,1)^N$ and $J := (0,1)$ stand for the unit cell and the unit interval, respectively, and $e_k$ denotes the $k$-th vector of the canonical basis of $\R^N$.
Let $u_{\e_n}$ be the unique weak solution to \eqref{eq:P}. Then there exist a subsequence of $(\e_n)$ and functions
\begin{align*}
u_{0}&\in W^{1,2}(0,T;H^{-1}(\Omega)) \cap L^{p+1}(\Omega\times(0,T)) 
\cap C_{\rm weak}([0,T];L^2(\Omega)),\\
z&\in L^{2}(\Omega\times (0,T) ;L^{2}(J;H^{1}_{\mathrm{per}}(\square)/\R))
\end{align*}
{\rm (}see Notation of Section {\rm\ref{S:Notation}} below{\rm )} such that $|u_{0}|^{p-1}u_{0} \in L^2(0,T;H^1_0(\Omega))$,
\begin{alignat*}{4}
|u_{\e_n}|^{p-1}u_{\e_n} &\to |u_{0}|^{p-1}u_{0} \quad &&\text{ weakly in }\ L^2(0,T;H^1_0(\Omega)), 
\\
u_{\e_n} &\to u_{0} \quad &&\text{ strongly in }\ L^\rho(0,T;L^{p+1}(\Omega))
\end{alignat*}
for any $\rho \in [1,+\infty)$ and
\begin{align*}
&a_{\e_n}\nabla |u_{\e_n}|^{p-1}u_{\e_n} \\
&\qquad\to j_{\rm hom}:=\left\langle a(\cdot,\cdot) \left(\nabla |u_{0}|^{p-1}u_0+\nabla_y z\right)\right\rangle_{y,s}\ \mbox{ weakly in } \  [L^2(\Omega \times  (0,T))]^N.
\end{align*}
Here and henceforth, $\nabla_y$ denotes gradients with respect to $y$ and $\langle \cdot\rangle_{y,s}$ denotes the mean over $\square\times J$, that is,
$$
\langle g \rangle_{y,s}=\int_0^1\int_{\square}g(y,s)\, dyds\quad \text{ for }\ g\in L^1(\square\times J).
$$
Moreover, the limit $u_0$ solves the weak form of the homogenized equation,
\begin{equation}\label{eq:P0}
\left\{
\begin{aligned}
&\langle \partial_tu_{0}(t),\phi\rangle_{H^1_0(\Omega)}+\int_{\Omega} j_{\rm hom}(x,t)\cdot\nabla\phi(x)\, dx  =\int_{\Omega} f(x,t)\phi(x)\, dx \ \mbox{ for } \, \phi\in H^1_0(\Omega),\\
&u_{0}( \cdot,0)=u^0 \ \mbox{ in } \Omega
\end{aligned}
 \right.
\end{equation}
for a.e.~$t \in (0,T)$.
\end{thm}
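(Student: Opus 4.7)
The strategy is the classical three-stage homogenization program: uniform energy estimates, two-scale compactness, and identification of the limit equation via the monotonicity of $u\mapsto|u|^{p-1}u$. First I would formally test the weak formulation of \eqref{eq:P} with $|u_\e|^{p-1}u_\e$ itself, which is admissible since $|u_\e|^{p-1}u_\e\in L^2(0,T;H^1_0(\Omega))$. Writing $G(u)=\tfrac{1}{p+1}|u|^{p+1}$ so that $G'(u)=|u|^{p-1}u$, a chain-rule identity (justified by a time mollification and the duality pairing $\langle\partial_t u_\e, |u_\e|^{p-1}u_\e\rangle = \tfrac{d}{dt}\int_\Omega G(u_\e)\,dx$) yields
\begin{equation*}
\frac{d}{dt}\int_\Omega G(u_\e)\,dx + \int_\Omega a_\e\,\nabla(|u_\e|^{p-1}u_\e)\cdot\nabla(|u_\e|^{p-1}u_\e)\,dx = \langle f_\e(t), |u_\e|^{p-1}u_\e(t)\rangle_{H^1_0(\Omega)}.
\end{equation*}
Integrating in time and combining \eqref{eq:ellip} with Poincaré's and Young's inequalities produces uniform bounds on $u_\e$ in $L^\infty(0,T;L^{p+1}(\Omega))$ and on $|u_\e|^{p-1}u_\e$ in $L^2(0,T;H^1_0(\Omega))$; the equation itself then controls $\partial_t u_\e$ in $L^2(0,T;H^{-1}(\Omega))$. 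In the FDE range $p\in(0,1)$ the additional $L^1(0,T;L^2(\Omega))$ hypothesis on $(f_\e)$ is invoked to upgrade these to an $L^\infty(0,T;L^2(\Omega))$ bound, following \cite{Ak,AO1}.

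Along a not-relabeled subsequence one then has $u_\e\to u_0$ weakly-$*$ in the relevant spaces, $|u_\e|^{p-1}u_\e\to V$ weakly in $L^2(0,T;H^1_0(\Omega))$, and $a_\e\nabla(|u_\e|^{p-1}u_\e)\to j_{\rm hom}$ weakly in $[L^2(\Omega\times(0,T))]^N$. Applying a space-time unfolding / two-scale compactness result to the bounded sequence $\nabla(|u_\e|^{p-1}u_\e)$, a corrector $z\in L^2(\Omega\times(0,T);L^2(J;H^1_{\mathrm{per}}(\square)/\R))$ emerges so that $\nabla(|u_\e|^{p-1}u_\e)$ two-scale converges, with fast variables $(y,s)=(\mx,\mt)$, to $\nabla V+\nabla_y z$. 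By linearity of $\xi\mapsto a(y,s)\xi$, the two-scale limit of the flux is $a(y,s)(\nabla V+\nabla_y z)$, and averaging over $\square\times J$ identifies $j_{\rm hom}=\langle a(\cdot,\cdot)(\nabla V+\nabla_y z)\rangle_{y,s}$. I expect the precise structure of $z$ (and hence any cell-problem it satisfies) to depend sensitively on the scale parameter $r$, but this is absorbed into the abstract two-scale compactness step since the theorem only asserts the existence of $z$.

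The decisive step, and the main analytic obstacle, is to identify $V$ with $|u_0|^{p-1}u_0$; this requires strong convergence of $u_\e$ in $L^\rho(0,T;L^{p+1}(\Omega))$. Combining the gradient bound on $|u_\e|^{p-1}u_\e$ with the time-derivative bound on $u_\e$ via an Aubin--Lions / Simon-type argument, together with the coercivity inequality $(|a|^{p-1}a-|b|^{p-1}b)(a-b)\geq c_p|a-b|^{p+1}$ for $p\geq 1$ and its natural $p<1$ counterpart controlled by $|a-b|^2(|a|+|b|)^{p-1}$, produces the required compactness; the FDE case is the most delicate and relies on the extra $L^\infty(L^2)$ bound noted above. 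Once $V=|u_0|^{p-1}u_0$ is in hand, passage to the limit in the weak formulation of \eqref{eq:P} with $x$-dependent test functions $w\in H^1_0(\Omega)$ (independent of the fast variables) delivers \eqref{eq:P0} with $j_{\rm hom}$ in the claimed form, while weak continuity in time combined with $u_\e(\cdot,0)=u^0$ recovers $u_0(\cdot,0)=u^0$.
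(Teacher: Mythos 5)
The paper does not prove Theorem \ref{T:1} itself; it is quoted verbatim from \cite[Theorem 1.3]{AO1}, so the only comparison available is with that reference. Your outline (energy identity from testing with $|u_\e|^{p-1}u_\e$, two-scale/unfolding compactness for $\nabla(|u_\e|^{p-1}u_\e)$ producing $z$, strong $L^\rho(0,T;L^{p+1})$ convergence via time-translate estimates in the $H^1_0$--$H^{-1}$ duality combined with the monotonicity inequality for $u\mapsto|u|^{p-1}u$, with the extra $L^\infty(L^2)$ bound handling the FDE range) is essentially the same argument carried out there, and I see no gap in it at this level of detail.
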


Furthermore, the homogenized diffusion flux $j_{\rm hom}$ is characterized as follows (see \cite{AO1,AO2} for the proof).
\begin{thm}[Characterization of $z$ and homogenized matrices
]\label{T:2}
Let $0 < p, r < +\infty$. In addition to all the assumptions of Theorem {\rm \ref{T:1}}, suppose that
\begin{equation*}
\begin{cases}
(f_{\e_n}) \mbox{ is bounded in } L^1( 0,T;L^{3-p}(\Omega))\ \text{ for } 0<p<2;\\
f_\e\equiv 0
\mbox{ and } u^0\ge 0
\mbox{ if }\ p\ge 2.
\end{cases}
\end{equation*}
In addition, assume that
\begin{equation*}
u^0 \in L^{3-p}(\Omega) \ \mbox{ if } \ p \in (0,1)\,{\rm ;} \
\log u^0 \in L^{1}_{\rm loc}(\Omega)\ \mbox{ if } \ p= 3\,{\rm ;} \
(u^0)^{3-p} \in L^{1}_{\rm loc}(\Omega)\ \mbox{ if } \ p\in (3,+\infty).
\end{equation*}
Let $u_0$ be a limit of weak solutions $(u_{\e_n})$ to \eqref{eq:P} as a sequence $\e_n \to 0_+$ and let $u_0$ be a weak solution of the homogenized equation \eqref{eq:P0}. 
Then
$z=z(x,t,y,s)$ appeared in Theorem {\rm \ref{T:1}} is represented as
\begin{equation}\label{eq:z}
z(x,t,y,s)=\sum_{k=1}^N(\partial_{x_k}v_0(x,t))\Phi_k(x,t,y,s),
\end{equation}
where 
$$
v_{0}=
\begin{cases}
|u_{0}|^{p-1}u_{0}\quad &\text{ if } p\in (0,2),\\
u_{0}^p\quad &\text{ if } p\in [2,+\infty)
\end{cases}
$$
and $\Phi_k\in L^{\infty}(\Omega\times I;L^2(J;H^1_{\mathrm{per}}(\square)/\R))$ is the corrector characterized as follows\/{\rm :}
\begin{description}
\item[\rm{(i)}] In case $0<r<2$, $\Phi_k=\Phi_k(y,s)$ is the unique weak solution to the cell-problem,
\begin{equation}\label{eq:CP1}
-\dv_y \left(a(y,s) \left[ \nabla_y\Phi_k(y,s)+e_{k}\right]\right)=0\ \text{ in }\ \square\times J.
\end{equation}

\item[\rm{(ii-FDE)}] In case $r=2$ and $p \in (0,1)$,
$\Phi_k=\Phi_k(x,t,y,s)$ solves the cell-problem,
\begin{equation}\label{eq:CP2}
\left\{
\begin{array}{ll}
\frac{1}{p}|u_0(x,t)|^{1-p}\partial_s\Phi_k(x,t,y,s)=\dv_y\left(a(y,s)\left[\nabla_y\Phi_k(x,t,y,s)+e_{k}\right]\right) \hspace{-1mm}&\mbox{in } \square\times J,\\
 \Phi_k(x,t,y,0)=\Phi_k(x,t,y,1) &\mbox{in } \square
\end{array}
\right.
\end{equation}
for each $(x,t) \in \Omega \times (0,T)$.
\item[\rm (ii-PME)] In case $r=2$ and $p\in(1,+\infty )$, $\Phi_k$ is given by
\begin{align}\label{eq:CP3}
\Phi_k(x,t,y,s)
= \begin{cases}
p |u_0(x,t)|^{p-1} \Psi_k(x,t,y,s) &\mbox{if } \ u_0(x,t) \neq 0,\\
0 &\mbox{if } \ u_0(x,t) = 0,
   \end{cases}
\end{align}
where $\Psi_k = \Psi_k(x,t,y,s)$ solves the cell-problem,
\begin{equation}\label{eq:CP4}
\left\{
\begin{array}{ll}
\partial_s\Psi_k(x,t,y,s)=\dv_y\left(a(y,s)\left[ p|u_0(x,t)|^{p-1}\nabla_y\Psi_k(x,t,y,s)+e_{k}\right]\right) \hspace{-1mm}&\mbox{in } \square\times J,\\
\Psi_k(x,t,y,0) = \Psi_k(x,t,y,1) &\mbox{in } \square
\end{array}
\right.
\end{equation}
for each $(x,t) \in [u_0\neq0] := \{(x,t) \in \Omega\times (0,T)\colon u_0(x,t) \neq 0\}$.
\item[\rm{(iii)}] In case $2<r<+\infty$,
$\Phi_k=\Phi_k(y)$ is the unique weak solution to the cell problem,
\begin{equation}\label{eq:CP5}
-\dv_y\biggl(\Bigl(\int_0^1a(y,s)\ ds\Bigl)[\nabla_y\Phi_k(y)+e_{k}]\biggl)=0\ \text{ in }\ \square.
\end{equation}
\end{description}
Moreover, the homogenized flux $j_{\rm hom}(x,t)$ can be written as
$$
j_{\rm hom}(x,t)=a_{\rm hom}(x,t)\nabla v_0(x,t).
$$
Here $a_{\rm hom}$ is the homogenized matrix given by
\begin{equation}\label{eq:ahom}
a_{\rm hom}(x,t) e_k = \int^1_0 \int_\square a(y,s) \left[ \nabla_y \Phi_k(x,t,y,s) + e_k \right] \, d y d s.
\end{equation}
\end{thm}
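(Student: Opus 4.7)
My plan is to identify the corrector $z$ given by Theorem~\ref{T:1} by passing to the two-scale limit in the weak formulation of \eqref{eq:P} against suitably chosen oscillating test functions, thereby extracting a cell problem on $\square\times J$ that $z$ must satisfy. Concretely, I would test \eqref{eq:P} against $\vep\phi(x,t)\psi(\mx,\mt)$, with $\phi\in C_c^\infty(\Omega\times(0,T))$ and $\psi$ smooth and $(\square\times J)$-periodic. Using the unfolding operators $\aop$, $\ufop$ and the weak two-scale convergence $a_{\vep_n}\nabla|u_{\vep_n}|^{p-1}u_{\vep_n}\wts a(y,s)[\nabla v_0 + \nabla_y z]$ already established in Theorem~\ref{T:1}, the limit $\vep_n\to 0_+$ yields a weak identity for $z(x,t,\cdot,\cdot)$ on $\square\times J$ at a.e.\ $(x,t)\in\Omega\times(0,T)$. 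Because this identity is linear in $\nabla v_0(x,t)$, one can seek $z$ in the ansatz form $z(x,t,y,s) = \sum_{k=1}^N \partial_{x_k}v_0(x,t)\,\Phi_k(x,t,y,s)$, with $\Phi_k$ determined by an (in the first and last regime, $(x,t)$-independent) cell problem.

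The precise form of the cell problem depends on the balance between the spatial diffusion scale $\vep^{-2}$ and the temporal oscillation scale $\vep^{-r}$ visible in $\partial_t\psi(\mx,\mt) = \vep^{-r}\partial_s\psi$. When $r\in(0,2)$ the $\partial_s$-contribution is asymptotically negligible and one may restrict to test functions $\psi=\psi(y)$, so that $\Phi_k=\Phi_k(y,s)$ solves the stationary cell problem \eqref{eq:CP1} for a.e.\ $s\in J$. When $r\in(2,+\infty)$ the time oscillation is so fast that $s$ averages out before the cell equation is seen, and $\Phi_k=\Phi_k(y)$ solves \eqref{eq:CP5} with averaged coefficients $\int_0^1 a(y,s)\,ds$. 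The borderline case $r=2$ retains the $\partial_s$-contribution and produces a parabolic cell problem in $(y,s)$: in the FDE regime $p\in(0,1)$, the identity $v_0=|u_0|^{p-1}u_0$ gives the chain rule $\partial_t u_0=\tfrac1p|u_0|^{1-p}\partial_t v_0$, which feeds into \eqref{eq:CP2}; in the PME regime $p\in[2,+\infty)$, setting $v_0=u_0^p$ and introducing the change of unknown $\Phi_k=p|u_0|^{p-1}\Psi_k$ on $[u_0\neq 0]$ linearizes the cell equation to \eqref{eq:CP4}.

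The main obstacle is the PME sub-case (ii-PME): the coefficient $p|u_0|^{p-1}$ in front of $\nabla_y\Psi_k$ in \eqref{eq:CP4} and the prefactor in $\Phi_k=p|u_0|^{p-1}\Psi_k$ both degenerate on $[u_0=0]$. One must therefore set up the cell problem only on $[u_0\neq 0]$, solve it there via Lax--Milgram in a time-periodic parabolic setting with the $(x,t)$-measurably dependent coefficient $p|u_0(x,t)|^{p-1}$, and then extend $\Phi_k$ by zero on $[u_0=0]$ while checking that the resulting object still lies in $L^{\infty}(\Omega\times(0,T);L^2(J;H^1_{\mathrm{per}}(\square)/\R))$ and coincides with the two-scale limit $z$ supplied by Theorem~\ref{T:1}. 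A milder $(x,t)$-measurability issue arises in (ii-FDE) from the coefficient $\tfrac1p|u_0|^{1-p}$ in \eqref{eq:CP2}, which is handled by the additional integrability imposed on $u^0$ and $f_\vep$.

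Once $\Phi_k$ is characterized in all four sub-cases, the formula \eqref{eq:ahom} for $a_{\rm hom}$ follows by substituting the ansatz $z=\sum_{k=1}^N\partial_{x_k}v_0\cdot\Phi_k$ into $j_{\rm hom}=\langle a(\cdot,\cdot)[\nabla v_0+\nabla_y z]\rangle_{y,s}$ from Theorem~\ref{T:1} and reading off the $k$-th column of $a_{\rm hom}(x,t)$ as the coefficient of $\partial_{x_k}v_0$.
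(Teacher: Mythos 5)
This paper does not itself prove Theorem \ref{T:2}; it is imported from the companion works \cite{AO1,AO2} (``see \cite{AO1,AO2} for the proof''), so there is no in-paper argument to compare against line by line. Your outline does follow the same general strategy as those references (oscillating test functions, passage to the two-scale limit, trichotomy according to the balance of the scales $\e^{-2}$ and $\e^{-r}$), but as a proof it has two genuine gaps. The first concerns the regime $0<r<2$: with your test function $\e\,\phi(x,t)\psi(\tfrac{x}{\e},\tfrac{t}{\e^r})$, the time derivative produces the factor $\e^{1-r}\partial_s\psi$, which \emph{diverges} for $1\le r<2$, so the $\partial_s$-contribution is not ``asymptotically negligible'' by naive power counting. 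Showing that it nevertheless vanishes is a real step: one writes $\psi$ in terms of a primitive in $s$, integrates by parts in $t$, and uses the uniform bound on $\partial_t u_{\e}$ in $L^2(0,T;H^{-1}(\Omega))$ --- which is exactly why $f_\e$ is assumed to lie in $W^{1,2}(0,T;H^{-1}(\Omega))$. The mirror-image issue governs the averaging in $s$ when $r>2$.

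The second and more serious gap is at the critical case $r=2$. There the cell problem couples $\partial_s$ of the two-scale fluctuation of $u_{\e_n}$ with $\dv_y$ of the flux of the fluctuation of $v_{\e_n}=|u_{\e_n}|^{p-1}u_{\e_n}$; these are correctors of \emph{two different quantities}, and the identities relating them (the $u$-corrector equals $\tfrac1p|u_0|^{1-p}z$ for FDE, and $z=p|u_0|^{p-1}$ times the $u$-corrector for PME) are not obtained by a formal chain rule applied to $\partial_t u_0$. They require linearizing $\tau\mapsto|\tau|^{p-1}\tau$ around $u_0$ along the sequence $u_{\e_n}$, which in turn requires strong convergence of $u_{\e_n}$ in a sufficiently strong topology together with additional a priori estimates. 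This is precisely where the hypotheses you never invoke enter: the boundedness of $(f_{\e_n})$ in $L^1(0,T;L^{3-p}(\Omega))$, the conditions $u^0\in L^{3-p}(\Omega)$, $\log u^0\in L^1_{\rm loc}(\Omega)$, $(u^0)^{3-p}\in L^1_{\rm loc}(\Omega)$, and $f_\e\equiv0$, $u^0\ge0$ for $p\ge2$. Relatedly, in (ii-PME) the extension of $\Phi_k$ by zero on $[u_0=0]$ is consistent with the representation \eqref{eq:z} only if one also shows that the fluctuation $z$ carries no mass on that set; you flag the degeneracy but do not address this identification. Until these points are filled in, your outline is a correct road map rather than a proof.
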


\begin{rmk}
\rm
It is noteworthy that the homogenized matrix \eqref{eq:ahom} is described in terms of solutions to cell problems, which have different forms depending on the log-ratio of the spatial and temporal periods of the coefficients. At a critical case (i.e., $r=2$), the cell problem turns out to be a parabolic equation in $(y,s)$ (as in linear diffusion) and also involves the limit of solutions, which is a function of $(x,t)$.
Thus $\Phi_k$ also depends on $(x,t)$, and hence, so does $a_{\rm hom}$.
On the other hand, as for $r\neq 2$, the cell problems are always elliptic equations, and then $\Phi_k$ is independent of $(x,t)$. Thus $a_{\rm hom}$ is a constant $N\times N$ matrix. Furthermore, the pair $(u_0, z)$ is uniquely determined, and hence, $(u_{\e_n})$ converges to $u_{0}$ without taking any subsequence.
\end{rmk}

\subsection{Main results}
The aim of the present paper is to establish the so-called corrector result for weak solutions to \eqref{eq:P}.
As mentioned in \cite{AO1, AG18, DM, Ho, NanRa01}, the gradient $\nabla u_{\e_n}^p$ does not converge to $\nabla u_0^p$ strongly in $[L^2(\Omega\times (0,T))]^N$ as $\e_n \to 0_+$.
However, by adding oscillating corrector terms, it follows that
\begin{align}
\nabla u_{\e_n}^p-\nabla u_0^p-\sum_{k=1}^N(\partial_{x_k}u_0^p)\nabla_y \Phi_k(x,t,\mnx,\mnt)\to 0
\ \text{ strongly in }\ [L^2(\Omega\times (0,T))]^N.
\label{eq:CR1}
\end{align}
To this end, we shall often need regularities\/{\rm :}$\nabla u_0^p\in [L^\sigma(\Omega\times (0,T))]^N$ and $\nabla_y\Phi_k\in [L^{\rho}(\Omega\times (0,T))]^N$ such that $\frac{1}{\sigma}+\frac{1}{\rho}=\frac{1}{2}$.
Then additional regularities of the coefficient $a(y,s)$ and given data may be required.
For instance, if $a\in [L^{\infty}(\R_+;C^{\alpha}(\R^{N}))]^{N\times N}$ for some $\alpha\in (0,1)$ and $(f_\e) $ is bounded in $L^2(0,T;L^{(p+1)/p}(\Omega))$, then $\nabla_y \Phi_k\in [L^{\infty}(\square\times J)]^N$ for $r\neq 2$ (see \cite{AGK} for details), and hence, \eqref{eq:CR1} holds true (see \cite[Theorem 1.6]{AO1}).
Moreover, at the critical case $r=2$, due to the dependence of $\Phi_k=\Phi_k(x,t,y,s)$, the smoothness of $a(y,s)$ is needed in \cite[Theorem 1.6]{AO1}.

The present paper provides another corrector result (introduced by \cite{CDG2}) without assumptions for the smoothness of $a(y,s)$ even for the critical case.

\begin{thm}[Corrector result for gradient]\label{T:cor}
Let $0 < p, r < +\infty$.
In addition to all the assumptions in Theorem {\rm\ref{T:2}}, suppose that
\begin{itemize}
 \item[] $f_{\e_n} \to f$ strongly in $L^2( 0,T;H^{-1}(\Omega))$ or weakly in $L^\sigma( 0,T;L^{(p+1)/p}(\Omega))$ 
\end{itemize}
for some $\sigma > 1$. Let $u_0$ be a limit of weak solutions $(u_{\e_n})$ to \eqref{eq:P} as a sequence $\e_n \to 0_+$ such that $u_0$ is a weak solution to \eqref{eq:P0}
and let $\Phi_{k}$ be the corrector given by \eqref{eq:CP1}--\eqref{eq:CP5}. 
Set
$$
v_{\e_n}:=
\begin{cases}
|u_{\e_n}|^{p-1}u_{\e_n}\quad &\text{ if } p\in (0,2),\\
u_{\e_n}^p\quad &\text{ if } p\in [2,+\infty),
\end{cases}\quad
v_{0}:=
\begin{cases}
|u_{0}|^{p-1}u_{0}\quad &\text{ if } p\in (0,2),\\
u_{0}^p\quad &\text{ if } p\in [2,+\infty).
\end{cases}
$$
Then it holds that
$$
\lim_{\e_n\to 0_+}\left\|\nabla v_{\e_n}-\nabla v_0-\sum_{k=1}^N\anop(\partial_{x_k}v_0)\,\anop(\nabla_y \Phi_k)
\right\|_{L^2(\Omega\times (0,T))}=0,
$$
where $\aop$ is the averaging operator
{\rm (}see Definition {\rm \ref{D:aop}} below{\rm )}.
\end{thm}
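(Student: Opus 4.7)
\emph{Overall strategy.} The plan is to employ the space-time unfolding framework in two stages: first upgrade the weak two-scale convergence of $\ufnop(\nabla v_{\e_n})$ supplied by Theorems~\ref{T:1}--\ref{T:2} to a \emph{strong} two-scale convergence, then translate this into the corrector estimate through the adjoint-type relation between the unfolding operator $\ufnop$ and the averaging operator $\anop$.

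\emph{Stage 1: strong two-scale convergence via an energy identity.} From Theorems~\ref{T:1} and~\ref{T:2} one already knows that $\ufnop(\nabla v_{\e_n})\rightharpoonup \nabla v_0 + \sum_{k=1}^N \partial_{x_k}v_0\,\nabla_y\Phi_k=:\nabla v_0+\nabla_y z$ weakly in $L^2(\Omega\times(0,T)\times\square\times J)$. Since $\ufnop$ is an $L^2$-isometry up to boundary strips and $\ufnop(a_{\e_n})=a$, the strong two-scale convergence reduces to the norm convergence
\[
\int_0^T\!\!\int_\Omega a_{\e_n}\nabla v_{\e_n}\cdot\nabla v_{\e_n}\,dxdt \longrightarrow \int_0^T\!\!\int_\Omega\!\left\langle a(y,s)(\nabla v_0+\nabla_y z)\cdot(\nabla v_0+\nabla_y z)\right\rangle_{y,s}\,dxdt.
\]
To prove it, I would exploit the energy identity obtained by testing \eqref{eq:P} against $v_{\e_n}$: the parabolic term $\int_0^T\langle\partial_tu_{\e_n},v_{\e_n}\rangle\,dt$ integrates (chain rule) to $\int_\Omega[\Psi(u_{\e_n}(T))-\Psi(u^0)]\,dx$ with $\Psi(u):=|u|^{p+1}/(p+1)$, and yields a $\limsup$-bound via the $C_{\rm weak}([0,T];L^2(\Omega))$-convergence of $u_{\e_n}$ together with weak lower semicontinuity of $\Psi$; the forcing passes by the assumed convergence of $f_{\e_n}$ paired with the strong convergence $v_{\e_n}\to v_0$ from Theorem~\ref{T:1}. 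Matching the resulting bound with the right-hand side above is then obtained by testing the homogenized problem \eqref{eq:P0} against $v_0$ and invoking the cell-problem identities \eqref{eq:CP1}--\eqref{eq:CP5}; in the critical case $r=2$, the $s$-periodicity built into \eqref{eq:CP2} and \eqref{eq:CP4} disposes of the time-derivative contribution.

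\emph{Stage 2: from strong two-scale convergence to the corrector form.} Once $\ufnop(\nabla v_{\e_n})\to \nabla v_0+\nabla_y z$ strongly in $L^2(\Omega\times(0,T)\times\square\times J)$, I would apply the generic identity $\|g_{\e_n}-\anop\ufnop(g_{\e_n})\|_{L^2(\Omega\times(0,T))}\to 0$ (valid for any $L^2$-bounded sequence) together with $L^2$-continuity of $\anop$, to obtain
\[
\nabla v_{\e_n}-\anop(\nabla v_0)-\sum_{k=1}^N\anop\bigl(\partial_{x_k}v_0\,\nabla_y\Phi_k\bigr)\longrightarrow 0\quad\text{in }[L^2(\Omega\times(0,T))]^N.
\]
Two density arguments then bring this to the stated form: (i) $\anop(\nabla v_0)\to \nabla v_0$ in $L^2$, because $\nabla v_0$ is independent of $(y,s)$ and $\anop$ acts as an approximation of the identity on such functions; (ii) $\anop(\partial_{x_k}v_0\,\nabla_y\Phi_k)-\anop(\partial_{x_k}v_0)\anop(\nabla_y\Phi_k)\to 0$ in $L^2$, obtained by approximating $\partial_{x_k}v_0$ in $L^2(\Omega\times(0,T))$ by functions that are piecewise constant on the $\e_n$-cells, on which $\anop$ is exactly multiplicative, and using the uniform $L^2$-bound on $\anop(\nabla_y\Phi_k)$.

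\emph{Main obstacle.} The hardest scenario is the critical scaling $r=2$, where $\Phi_k=\Phi_k(x,t,y,s)$ genuinely depends on $(x,t)$ through $u_0(x,t)$ while $a$ is only assumed bounded, so $\nabla_y\Phi_k$ has no pointwise regularity in $(y,s)$ and the $L^\infty$-control used in \cite[Theorem~1.6]{AO1} is unavailable. Both the norm-convergence step in Stage~1 and the product-splitting step (ii) in Stage~2 then hinge on a careful approximation of $\partial_{x_k}v_0$ by piecewise constants combined with a stability estimate for the parabolic cell problems \eqref{eq:CP2} and \eqref{eq:CP4} under perturbations of their $(x,t)$-dependent coefficients $|u_0|^{1-p}$, respectively $p|u_0|^{p-1}$. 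Replacing the classical pointwise regularity of $\nabla_y\Phi_k$ by this density/stability mechanism is, in my view, the technical heart of the unfolding-based corrector result.
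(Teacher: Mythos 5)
Your proposal follows essentially the same route as the paper's proof: Stage~1 is precisely the paper's Lemma on strong two-scale convergence of $\nabla v_{\e_n}$ (energy identity tested with $v_{\e_n}$, weak lower semicontinuity, cell-problem orthogonality, and the deduction from weighted-energy convergence to the unweighted $L^2$-norm convergence demanded by Proposition~\ref{P:scondi}), while Stage~2 reproduces the paper's three-term decomposition $I_1^{\e_n}+I_2^{\e_n}+I_3^{\e_n}$, with the product-splitting step resting, exactly as in the paper, on a bound for $\nabla_y\Phi_k$ that is uniform in $(x,t)$ but only $L^2$ in $(y,s)$. The sole difference is that the paper simply quotes $\nabla_y\Phi_k\in [L^{\infty}(\Omega\times I;L^2(\square\times J))]^N$ from \cite[Appendix]{AO1} rather than re-deriving it through a stability analysis of the cell problems \eqref{eq:CP2} and \eqref{eq:CP4}.
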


Furthermore, we have the following corollary (see also Remark \ref{R:nvanish})\/{\rm :}
\begin{cor}[Corrector results for diffusion flux and time-derivative]\label{C:cor}
Under the same assumptions as in Theorem {\rm\ref{T:cor}}, it holds that
\begin{align*}
&\bigg\| j_{\e_n} - j_{\rm hom}
- \Bigl[ a_{\e_n} \Bigl(
	\nabla v_0 + \sum_{k=1}^N \anop\left( \partial_{x_k} v_0 \right)\,\anop( \nabla_y\Phi_k )\Bigl) - j_{\rm hom} \Bigl] \bigg\|_{L^2(\Omega\times (0,T))}\nonumber
\end{align*}
$\to 0$ as $\e_n\to 0_+$, where $j_{\e_n} = a_{\e_n} \nabla v_{\e_n}$ and $j_{\rm hom}$ is the diffusion flux defined in Theorem {\rm \ref{T:1}}.  Moreover,  it holds that
\begin{align*}
&\bigg\| \partial_t v_{\e_n}^{1/p} - \partial_t v_0^{1/p}
- \mathrm{div} \Bigl[
a_{\e_n} \Bigl(
	\nabla v_0 + \sum_{k=1}^N \anop\left( \partial_{x_k} v_0 \right)\,\anop( \nabla_y\Phi_k )\Bigl)
- j_{\rm hom} \Bigl] \bigg\|_{L^2(0,T;H^{-1}(\Omega))}\nonumber
\end{align*}
$\to 0$ as $\e_n\to 0_+$.
\end{cor}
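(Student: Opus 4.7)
The key observation is that both estimates in Corollary \ref{C:cor} reduce, via an algebraic cancellation, to the gradient corrector result of Theorem \ref{T:cor}. Set
\[
E_{\e_n}\,:=\,\nabla v_{\e_n}-\nabla v_0-\sum_{k=1}^N \anop(\partial_{x_k}v_0)\,\anop(\nabla_y \Phi_k),
\]
so that Theorem \ref{T:cor} reads $\|E_{\e_n}\|_{L^2(\Omega\times(0,T))}\to 0$. The plan is simply to rewrite the two expressions inside the norms of the corollary in terms of $E_{\e_n}$ and, where it appears, $(f_{\e_n}-f)$, and then invoke Theorem \ref{T:cor} together with the convergence hypothesis on $(f_{\e_n})$.

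For the diffusion-flux statement I would substitute $j_{\e_n}=a_{\e_n}\nabla v_{\e_n}$ into the bracketed expression. The two copies of $j_{\rm hom}$ cancel, and what remains factors cleanly as $a_{\e_n}E_{\e_n}$. The uniform pointwise bound $|a(y,s)\xi|\le|\xi|$ implied by \eqref{eq:ellip} then yields
\[
\|a_{\e_n}E_{\e_n}\|_{L^2(\Omega\times(0,T))}\,\le\,\|E_{\e_n}\|_{L^2(\Omega\times(0,T))}\,\longrightarrow 0.
\]

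For the time-derivative statement, I would first identify $v_{\e_n}^{1/p}=u_{\e_n}$ and $v_0^{1/p}=u_0$ (using the definition of $v$ together with the sign hypothesis $u^0\ge 0$ for $p\ge 2$ from Theorem \ref{T:2}, and the sign convention $w^{1/p}=\mathrm{sgn}(w)|w|^{1/p}$ for $p\in(0,2)$). Next I would view the weak forms of \eqref{eq:P} and \eqref{eq:P0} as identities in $L^2(0,T;H^{-1}(\Omega))$,
\[
\partial_tu_{\e_n}=\dv j_{\e_n}+f_{\e_n},\qquad \partial_tu_0=\dv j_{\rm hom}+f,
\]
subtract, and then subtract the corrector divergence from both sides. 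The expression inside the $L^2(0,T;H^{-1})$-norm collapses to $\dv(a_{\e_n}E_{\e_n})+(f_{\e_n}-f)$. The first summand is controlled in $L^2(0,T;H^{-1})$ by $\|a_{\e_n}E_{\e_n}\|_{L^2(\Omega\times(0,T))}$, which was shown to vanish in the previous step. Under the first alternative on $(f_{\e_n})$ the second summand tends to zero by assumption.

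The one delicate point is the second alternative, $f_{\e_n}\to f$ weakly in $L^\sigma(0,T;L^{(p+1)/p}(\Omega))$, which does not a priori imply strong convergence in $L^2(0,T;H^{-1}(\Omega))$. I would handle it by combining the uniform bound of $(f_{\e_n})$ in $W^{1,2}(0,T;H^{-1}(\Omega))$ coming from the hypotheses of Theorem \ref{T:1} with the compact embedding $L^{(p+1)/p}(\Omega)\hookrightarrow\hookrightarrow H^{-1}(\Omega)$ and a standard Aubin--Lions argument, upgrading the weak $L^\sigma(0,T;L^{(p+1)/p})$-convergence to strong $L^2(0,T;H^{-1})$-convergence; the limit is identified as $f$ from the weak convergence alone. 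This is where I expect the main (and only) subtlety to lie; once it is in hand, the decomposition above closes the argument without further work.
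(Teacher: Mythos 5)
Your decomposition is exactly the paper's: for the flux statement the two copies of $j_{\rm hom}$ cancel, the remainder factors as $a_{\e_n}E_{\e_n}$, and \eqref{eq:ellip} gives $\|a_{\e_n}E_{\e_n}\|_{L^2}\le\|E_{\e_n}\|_{L^2}\to0$ by Theorem \ref{T:cor}; for the time-derivative statement the paper likewise subtracts the weak forms of \eqref{eq:P} and \eqref{eq:P0}, uses $\|\dv\varphi\|_{H^{-1}(\Omega)}\le\|\varphi\|_{L^2(\Omega)}$, and is left with the flux error plus $\|f_{\e_n}-f\|_{L^2(I;H^{-1}(\Omega))}$. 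Up to that point your proposal is correct and coincides with the paper's proof.

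The one place you depart from the paper is the treatment of the term $f_{\e_n}-f$ under the second alternative, and your patch does not close as stated. First, the hypotheses of Theorem \ref{T:1} only assert $f_{\e}\in W^{1,2}(0,T;H^{-1}(\Omega))$ for each fixed $\e$; no uniform bound of $(f_{\e_n})$ in that space is assumed, so Aubin--Lions has no equicontinuity-in-time input. Second, the embedding $L^{(p+1)/p}(\Omega)\hookrightarrow H^{-1}(\Omega)$ is compact only when $(p+1)/p>2N/(N+2)$; since $f_\e\equiv0$ for $p\ge2$ the relevant range is $p<2$, where $(p+1)/p>3/2$, and $3/2\ge 2N/(N+2)$ forces $N\le6$ --- for $N\ge7$ and $p$ close to $2$ the embedding is not compact (indeed not even continuous). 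Third, even granting both, Aubin--Lions with time-integrability exponent $\sigma$ (which may lie in $(1,2)$) would yield strong convergence only in $L^\sigma(0,T;H^{-1}(\Omega))$, not in $L^2$. For what it is worth, the paper's own proof simply writes $\|f_{\e_n}-f\|_{L^2(I;H^{-1}(\Omega))}\to0$ without comment, i.e., it effectively invokes only the first alternative at this step; your instinct that the weak alternative requires an additional argument is sound, but the mechanism you propose is not supported by the stated hypotheses, so either restrict the time-derivative assertion to the strong-convergence alternative or supply a genuinely different justification.
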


\subsection{Plan of the paper and notation}\label{S:Notation}
This paper is composed of three sections.
In the next section, we shall briefly review the relevant material on (weak and strong) \emph{space-time two-scale convergence} and \emph{space-time unfolding method}, and then we shall recall their relations. In the final section, we shall prove Theorem \ref{T:cor} and Corollary \ref{C:cor}.

\noindent
{\bf Notation.}\
Throughout this paper, we shall use the following notation\/:
\begin{itemize}
\item We simply write $I=(0,T)$ for $T>0$, and moreover, $\square=(0,1)^N$ and $J=(0,1)$ are the unit cell and interval, respectively. 
\item Define the set of smooth $\square$-periodic functions by
\begin{align*}
C^{\infty}_{\rm per}(\square)
&= \{w\in C^{\infty}(\R^N) \colon w(\cdot+e_k)=w(\cdot) \text{ in } \R^N \ \text{ for }\ 1\leq k \leq N\}.
\end{align*}
\item We also define $W^{1,q}_{\mathrm{per}}(\square)$ and $L^q_{\mathrm{per}}(\square)$ as closed subspaces of $W^{1,q}(\square)$ and $L^q(\square)$ by
$$
W^{1,q}_{\mathrm{per}}(\square) = \overline{C^\infty_{\rm per}(\square)}^{\|\cdot\|_{W^{1,q}(\square)}}, \quad L^q_{\mathrm{per}}(\square) = \overline{C^\infty_{\rm per}(\square)}^{\|\cdot\|_{L^q(\square)}} \simeq L^q(\square),
$$
respectively, for $1\leq q < +\infty$. In particular, set $H^1_{\rm per}(\square) := W^{1,2}_{\rm per}(\square)$. We shall simply write $L^q(\square)$ instead of $L^q_{\rm per}(\square)$, when no confusion can arise.
\item Denote by $C_{\rm weak}(\overline{I};X)$ the set of all weakly continuous functions defined on $\overline{I}$ with values in a normed space $X$.
Moreover, we write $X^N = X \times X \times \cdots \times X$ ($N$-product space), e.g., $[L^2(\Omega)]^N = L^2(\Omega;\R^N)$.
\item Define the set of Lebesgue measurable functions on $A\subset \R^{N+1}$ by $\mfs(A)$.
\item The unfolding operator $\ufop$ and
the averaging operator $\aop$ will be defined in Definitions \ref{D:ufop} and
\ref{D:aop}, respectively.
\end{itemize}

\section{Two-scale convergence theory and unfolding method}
Throughout this section, let $0<r<+\infty$ and $1< q< +\infty$, when no confusion can arise. Moreover, $q'$ denotes the H\"older conjugate of $q$, i.e.,~$1/q+1/q'=1$.
\subsection{Space-time two-scale convergence theory}
In this subsection, we shall recall the notion of \emph{space-time two-scale convergence} and briefly summarize its crucial properties without proofs (see e.g.,~\cite{Al, Ho, LNW,Ng, Vis06, Zh} for more details).

\begin{defi}[Weak and strong space-time two-scale convergence] \label{D:two}\
\begin{itemize}
\item[(i)]
A bounded sequence $(v_{\e})$ in $L^q(\Omega\times I)$ is said to \emph{weakly space-time two-scale converge} to a limit $v\in L^q(\Omega\times I\times \square\times J)$, if it holds that
\begin{align*}
\lim_{\e\to 0_+}
\int_0^T\int_{\Omega}v_{\e}(x,t)\Psi(x,t,\tfrac{x}{\e},\tfrac{t}{\e^r})\, dxdt
=
\int_0^T\int_{\Omega}\int_{0}^1\int_{\square}v(x,t,y,s)\Psi(x,t,y,s)\, dydsdxdt
\end{align*}
for any $\Psi(x,t,y,s)=\phi(x)\psi(t) b(y) c(s)$, $\phi\in C^{\infty}_{\rm c}(\Omega)$, $\psi\in C^{\infty}_{\rm c}(I)$, $b\in C^{\infty}_{\rm per}(\square)$ and $c\in C^{\infty}_{\rm per}(J)$, and it is denoted by
\[
v_{\e}\wtts v \quad \text{ in }\ L^q(\Omega\times I\times \square\times J).
\]
\item[(ii)]
A bounded sequence $(v_{\e})$ in $L^q(\Omega\times I)$ is said to \emph{strongly space-time two-scale converge} to a limit $v\in L^q(\Omega\times I\times \square\times J)$, if it holds that
\begin{align*}
\lim_{\e\to 0_+}
\int_0^T\int_{\Omega}v_{\e}(x,t)w_\e(x,t)\, dxdt
=
\int_0^T\int_{\Omega}\int_{0}^1\int_{\square}v(x,t,y,s)
w(x,t,y,s)\, dydsdxdt
\end{align*}
for any $w_\e\in L^{q'}(\Omega\times I)$ satisfying
$w_\e\wtts w$ in $L^{q'}(\Omega\times I\times \square\times J)$, and it is written as
\[
v_{\e}\stts v \quad \text{ in }\ L^q(\Omega\times I\times \square\times J).
\]
\end{itemize}
\end{defi}

As for the relation between (i) and (ii) in Definition \ref{D:two}, we have
\begin{prop}\label{P:scondi}
Let $v_\e\in L^{q}(\Omega\times I)$ be such that $v_\e\wtts v$ in $L^q(\Omega\times I\times \square\times J)$.
Then $v_\e\stts v$ in $L^q(\Omega\times I\times \square\times J)$, if and only if,
\begin{equation}\label{eq:chkstts}
\lim_{\e\to 0_+}\|v_\e\|_{L^q(\Omega\times I)}=\|v\|_{L^q(\Omega\times I\times \square\times J)}.
\end{equation}
\end{prop}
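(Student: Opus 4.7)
The plan is to handle the two implications separately: the forward direction ($\Rightarrow$) is a duality argument with $w_\e := |v_\e|^{q-2}v_\e$, while the converse ($\Leftarrow$) combines a density argument using admissible oscillating test functions with the Radon--Riesz (Kadec--Klee) property of the uniformly convex space $L^q$.

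For ($\Rightarrow$), set $w_\e := |v_\e|^{q-2}v_\e$, which is bounded in $L^{q'}(\Omega\times I)$ since $\|w_\e\|_{L^{q'}} = \|v_\e\|_{L^q}^{q-1}$, so up to a subsequence (not relabelled) $w_\e \wtts w$ in $L^{q'}(\Omega\times I\times \square \times J)$ for some $w$. Applying the definition of strong two-scale convergence to this pairing,
\begin{equation*}
\|v_\e\|_{L^q(\Omega\times I)}^q = \int_0^T\!\!\int_\Omega v_\e w_\e\, dxdt \longrightarrow \int_0^T\!\!\int_\Omega\!\int_0^1\!\!\int_\square v w\, dydsdxdt \leq \|v\|_{L^q}\|w\|_{L^{q'}}.
\end{equation*}
The standard lower semicontinuity of the norm under weak two-scale convergence (obtained by testing against admissible tensor products that approximate the dual norm) gives $\|w\|_{L^{q'}} \leq \liminf_\e \|v_\e\|_{L^q}^{q-1}$, so chaining yields $\limsup_\e \|v_\e\|_{L^q} \leq \|v\|_{L^q}$. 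Combined with the analogous lower bound $\|v\|_{L^q} \leq \liminf_\e \|v_\e\|_{L^q}$, this is exactly \eqref{eq:chkstts}; a routine sub-subsequence argument transfers the conclusion from the extracted subsequence back to the original sequence.

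For ($\Leftarrow$), fix any test sequence $w_\e \wtts w$ in $L^{q'}$ and approximate $v$ in $L^q(\Omega\times I\times \square\times J)$ by admissible functions $\phi_k \in C^\infty_{\rm c}(\Omega\times I)\otimes C^\infty_{\rm per}(\square)\otimes C^\infty_{\rm per}(J)$, setting $\phi_k^\e(x,t) := \phi_k(x,t,x/\e,t/\e^r)$. Split
\begin{equation*}
\int_0^T\!\!\int_\Omega v_\e w_\e\, dxdt = \int_0^T\!\!\int_\Omega \phi_k^\e w_\e\, dxdt + \int_0^T\!\!\int_\Omega (v_\e - \phi_k^\e) w_\e\, dxdt.
\end{equation*}
The first summand tends to $\int\!\int\!\int \phi_k w$ as $\e \to 0_+$ by weak two-scale convergence of $w_\e$ against the admissible test function $\phi_k$, and then to $\int\!\int\!\int v w$ as $k \to \infty$. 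The second summand is bounded via H\"older by $\|v_\e - \phi_k^\e\|_{L^q}\|w_\e\|_{L^{q'}}$, so it remains to prove $\limsup_\e \|v_\e - \phi_k^\e\|_{L^q} \to 0$ as $k\to\infty$.

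This last step is the main obstacle, and it is where uniform convexity of $L^q$ must be invoked. For $q=2$, the Hilbertian identity
\begin{equation*}
\|v_\e - \phi_k^\e\|_{L^2}^2 = \|v_\e\|_{L^2}^2 - 2\int_0^T\!\!\int_\Omega v_\e \phi_k^\e\, dxdt + \|\phi_k^\e\|_{L^2}^2
\end{equation*}
passes to the limit by using the hypothesis \eqref{eq:chkstts}, the weak two-scale convergence of $v_\e$ against the admissible $\phi_k^\e$, and the Riemann-sum limit $\|\phi_k^\e\|_{L^2(\Omega\times I)} \to \|\phi_k\|_{L^2(\Omega\times I\times \square\times J)}$ valid for admissible test functions; the limit equals $\|v - \phi_k\|_{L^2}^2$, which is small for $k$ large. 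For general $q \in (1,+\infty)$ I would replace this Hilbertian expansion by a Clarkson-type inequality applied to $v_\e$ and $\phi_k^\e$, passing to the $\liminf$ on the resulting symmetric combinations using the same lower semicontinuity; the uniform convexity of $L^q$ then delivers the desired smallness and completes the proof.
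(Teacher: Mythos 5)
Your argument is correct. The paper gives no proof of its own here — it defers entirely to \cite[Lemma 4.4]{Zh} — so there is no internal argument to compare against; what you have written is the standard self-contained proof underlying that reference (duality against $|v_\e|^{q-2}v_\e$ for necessity; density of admissible oscillating test functions plus uniform convexity for sufficiency), and it is also consistent in spirit with the paper's later use of uniform convexity in Proposition \ref{equiv-stts}. Two remarks on the steps you leave compressed. First, in the forward direction the ``chaining'' is cleanest if you first pass to a sub-subsequence along which $\|v_\e\|_{L^q(\Omega\times I)}$ converges to some $L$; then $L^q=\int\!\!\int vw\le\|v\|_{L^q}\|w\|_{L^{q'}}\le\|v\|_{L^q}L^{q-1}$ gives $L\le\|v\|_{L^q}$, the lower semicontinuity gives the reverse inequality, and your sub-subsequence device finishes it — so this is fine as stated. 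Second, the Clarkson step you only sketch for general $q$ does close. For $q\ge 2$, Clarkson's first inequality gives
\begin{equation*}
\Bigl\|\tfrac{v_\e-\phi_k^\e}{2}\Bigr\|_{L^q(\Omega\times I)}^q
\le\tfrac12\Bigl(\|v_\e\|_{L^q(\Omega\times I)}^q+\|\phi_k^\e\|_{L^q(\Omega\times I)}^q\Bigr)
-\Bigl\|\tfrac{v_\e+\phi_k^\e}{2}\Bigr\|_{L^q(\Omega\times I)}^q,
\end{equation*}
and taking $\limsup_{\e\to0_+}$ — using \eqref{eq:chkstts}, the mean-value property $\|\phi_k^\e\|_{L^q(\Omega\times I)}\to\|\phi_k\|_{L^q(\Omega\times I\times\square\times J)}$ valid for admissible $\phi_k$, and the lower semicontinuity of the norm along $v_\e+\phi_k^\e\wtts v+\phi_k$ — the right-hand side is bounded above by $\tfrac12\bigl(\|v\|^q+\|\phi_k\|^q\bigr)-\bigl\|\tfrac{v+\phi_k}{2}\bigr\|^q$, which tends to $0$ as $k\to\infty$ since $\phi_k\to v$ in $L^q(\Omega\times I\times\square\times J)$; for $1<q<2$ the second Clarkson inequality gives the analogous bound with exponent $q'$ on the left and $q'/q$ on the right. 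With those two points spelled out, the proof is complete and requires no idea beyond what you have already identified.
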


\begin{proof}
See \cite[Lemma 4.4]{Zh}.
\end{proof}

The following fact is useful to verify \eqref{eq:chkstts}.
\begin{prop}[Lower semicontinuity]\label{P:two-lsc}
Let $V_\e\wtts V$ in $[L^2(\Omega\times I\times \square\times J)]^N$. Then it holds that
\begin{equation}\label{eq:lsc}
\liminf_{\e\to 0_+}\int_0^T\int_{\Omega} |V_\e(x,t)|^2\, dxdt\ge
\int_0^T\int_\Omega\int_0^1\int_\square |V(x,t,y,s)|^2 dydsdxdt.
\end{equation}
Furthermore, for any symmetric matrices $W\in [L^{\infty}(\R^{N}\times\R_+)]^{N\times N}$ satisfying $(\square\times J)$-periodicity and uniform ellipticity {\rm(}e.g.,~\eqref{eq:ellip}{\rm)}, it follows that
\begin{align}\label{eq:matlsc}
&\liminf_{\e\to 0_+}\int_0^T\int_{\Omega} W(\mx,\mt)V_\e(x,t)\cdot V_\e(x,t)\, dxdt\\
&\quad \ge
\int_0^T\int_\Omega\int_0^1\int_{\square}
W(y,s)V(x,t,y,s)\cdot V(x,t,y,s)\, dydsdxdt.
\nonumber
\end{align}
\end{prop}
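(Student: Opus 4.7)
The plan is to use the nonnegativity of a quadratic form centered at an oscillating test function and then to pass to the limit via the weak space-time two-scale convergence, closing the argument by a density of tensor products. This is the classical lower-semicontinuity argument, with the added twist that admissibility of test functions must be verified in the two-scale sense.

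For \eqref{eq:lsc}, I would fix a vector-valued test function
$$
\Psi(x,t,y,s)=\sum_{\alpha=1}^M \phi_\alpha(x)\psi_\alpha(t)b_\alpha(y)c_\alpha(s)\mathbf{v}_\alpha,
$$
with $\phi_\alpha\in C^\infty_{\rm c}(\Omega)$, $\psi_\alpha\in C^\infty_{\rm c}(I)$, $b_\alpha\in C^\infty_{\rm per}(\square)$, $c_\alpha\in C^\infty_{\rm per}(J)$ and $\mathbf{v}_\alpha\in\R^N$, and set $\Psi_\e(x,t):=\Psi(x,t,\mx,\mt)$. Expanding the nonnegative integral $\int_0^T\int_\Omega|V_\e-\Psi_\e|^2\,dxdt$ gives
$$
\int_0^T\int_\Omega|V_\e|^2\,dxdt\,\ge\, 2\int_0^T\int_\Omega V_\e\cdot\Psi_\e\,dxdt-\int_0^T\int_\Omega|\Psi_\e|^2\,dxdt.
$$
By $V_\e\wtts V$, applied componentwise against each scalar tensor product $\phi_\alpha\psi_\alpha b_\alpha c_\alpha$, the middle term converges to $\int_0^T\int_\Omega\int_0^1\int_\square V\cdot\Psi$, while the last term converges to $\int_0^T\int_\Omega\int_0^1\int_\square|\Psi|^2$ by the classical periodic averaging lemma applied to the smooth $(\square\times J)$-periodic integrand $|\Psi(x,t,\cdot,\cdot)|^2$. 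Taking the liminf in $\e$ and then approximating $V$ in $[L^2(\Omega\times I\times\square\times J)]^N$ by a sequence $\Psi_n\to V$ of tensor-product test functions of the above type, the right-hand side converges (by completing the square) to $2\|V\|_{L^2}^2-\|V\|_{L^2}^2=\|V\|_{L^2}^2$, which proves \eqref{eq:lsc}.

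The argument for \eqref{eq:matlsc} is structurally identical. Starting from
$$
0\le\int_0^T\int_\Omega W(\mx,\mt)(V_\e-\Psi_\e)\cdot(V_\e-\Psi_\e)\,dxdt
$$
and using the symmetry of $W$ to handle the cross term, I obtain
$$
\int_0^T\int_\Omega W(\mx,\mt) V_\e\cdot V_\e\,dxdt\ge 2\int_0^T\int_\Omega W(\mx,\mt) V_\e\cdot\Psi_\e\,dxdt-\int_0^T\int_\Omega W(\mx,\mt)\Psi_\e\cdot\Psi_\e\,dxdt.
$$
The main technical obstacle here is that $(y,s)\mapsto W(y,s)\Psi(x,t,y,s)$ is no longer a smooth tensor product, since the hypotheses on $W$ only guarantee $W\in [L^\infty(\R^N\times\R_+)]^{N\times N}$. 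To pass to the limit in the middle integral I would invoke the standard extension of weak two-scale convergence to admissible test functions in $C_{\rm c}(\Omega\times I;L^\infty_{\rm per}(\square\times J))$, whose justification is a Fourier-expansion/density argument in the periodic variables and is well documented in the references \cite{Al,Ho,LNW,Vis06}. Under this extension, the middle integral converges to $\int_0^T\int_\Omega\int_0^1\int_\square WV\cdot\Psi$, while the purely oscillating term $\int W(\mx,\mt)\Psi_\e\cdot\Psi_\e$ tends to its $(y,s)$-mean $\int_0^T\int_\Omega\int_0^1\int_\square W\Psi\cdot\Psi$ by periodic averaging. Since $W$ is symmetric, bounded and uniformly elliptic, the bilinear form $(\Phi_1,\Phi_2)\mapsto\int W\Phi_1\cdot\Phi_2$ is $L^2$-continuous, so approximating $V$ by tensor products $\Psi_n\to V$ again and using the completion-of-the-square identity for this bilinear form yields \eqref{eq:matlsc}.
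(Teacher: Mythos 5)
Your argument is correct. The paper does not prove this proposition itself but simply cites \cite[Section 7]{Zh}, and your completion-of-the-square argument (expand $\int W(\tfrac{x}{\e},\tfrac{t}{\e^r})(V_\e-\Psi_\e)\cdot(V_\e-\Psi_\e)\ge 0$, pass to the limit against tensor-product test functions, then let $\Psi_n\to V$ in $L^2$ using continuity of the bilinear form) is precisely the classical route taken there. You also correctly identify and address the only delicate point, namely that $W$ is merely $L^\infty$ so $W\Psi$ is not a test function in the sense of Definition \ref{D:two}; the density/periodic-averaging extension to test functions in $C_{\rm c}(\Omega\times I;L^\infty_{\rm per}(\square\times J))$ that you invoke is standard and suffices.
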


\begin{proof}
See {\cite[Section 7]{Zh}}.
\end{proof}

The following theorem is concerned with the weak space-time two-scale compactness.

\begin{prop}[Weak space-time two-scale compactness]\label{multicpt}
For any bounded sequence $(v_{\e})$ in $L^q(\Omega\times I)$, there exist a subsequence $(\e_n)$ of $(\e)$ such that $\e_n\to 0_+$ and a limit $v\in L^q(\Omega\times I\times \square\times J)$ such that
$$
v_{\e_n}\wtts v\quad \text{ in }\ L^q(\Omega\times I\times \square\times J).
$$
In addition, suppose that
$(v_{\e})$ be a bounded sequence in $L^{q}(I;W^{1,q}(\Omega))$ such that $v_{\vep_n} \to v$ strongly in $L^q(\Omega \times I)$ for some subsequence $(\vep_n)\to 0_+$ of $(\vep)$ and a limit $v\in L^q(I;W^{1,q}(\Omega))$.
Then there exist a {\rm(}not relaveled{\rm )} subsequence of $\vep_n\to 0_+$ and a function $z\in L^q(\Omega\times I;L^{q}(J; W^{1,q}_{\mathrm{per}}(\square)/\R))$ such that
\begin{align*}
  \nabla v_{\e_n}\wtts \nabla v+\nabla_y z\quad \text{ in }\ [L^q(\Omega\times I\times \square\times J)]^N.
\end{align*}
\end{prop}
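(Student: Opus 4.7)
The plan is to apply the classical Nguetseng--Allaire duality argument, extended to the space-time periodic setting, and then to identify the gradient's two-scale limit by testing against correctors that are divergence-free in the fast spatial variable.

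For the first assertion, I would first verify that for every admissible tensor test function $\Psi(x,t,y,s) = \phi(x)\psi(t)b(y)c(s)$ the oscillating trace satisfies $\|\Psi(x,t,\mx,\mt)\|_{L^{q'}(\Omega\times I)} \to \|\Psi\|_{L^{q'}(\Omega\times I\times\square\times J)}$; this follows by iterating the classical mean-value lemma for periodic functions, first in $y$ and then in $s$, the two scales decoupling because they act on independent variables. H\"older's inequality then produces a uniform bound on the linear functionals $T_\e(\Psi) := \int_{\Omega\times I} v_\e(x,t)\,\Psi(x,t,\mx,\mt)\,dx\,dt$, with operator norm controlled by $\|v_\e\|_{L^q}\|\Psi\|_{L^{q'}}$. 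A diagonal extraction over a countable dense subfamily, followed by Riesz representation, delivers the subsequence and a representative $v \in L^q(\Omega\times I\times\square\times J)$.

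For the gradient assertion, my plan is to apply the first part componentwise to $(\nabla v_\e) \subset [L^q(\Omega\times I)]^N$ to extract a (not relabelled) subsequence along which $\nabla v_{\e_n} \wtts V$, and then to identify $V = \nabla v + \nabla_y z$. The identification uses test fields of the form $\Psi_\e(x,t) := \phi(x,t) B(\mx,\mt)$, with $\phi \in C^\infty_{\rm c}(\Omega\times I)$ and $B \in [C^\infty_{\rm per}(\square\times J)]^N$ chosen so that $\dv_y B(\cdot,s) = 0$ for every $s \in J$. Integration by parts in $x$ yields
\begin{equation*}
\int_{\Omega\times I} \nabla v_\e \cdot \Psi_\e\,dx\,dt
= -\int_{\Omega\times I} v_\e \Bigl[\nabla\phi \cdot B(\mx,\mt) + \tfrac{1}{\e}\phi\,(\dv_y B)(\mx,\mt)\Bigr]dx\,dt,
\end{equation*}
whose $1/\e$ term vanishes by construction. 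Passing to the limit using the two-scale convergence of the left-hand side together with the strong $L^q$-convergence $v_{\e_n} \to v$ and the weak $L^{q'}$-convergence $B(\mx,\mt) \rightharpoonup \langle B\rangle_{y,s}$ on the right, and then integrating by parts back in $x$, I expect to obtain
\begin{equation*}
\int_{\Omega\times I}\int_0^1\int_\square \bigl(V(x,t,y,s) - \nabla v(x,t)\bigr)\cdot \phi(x,t)B(y,s)\,dy\,ds\,dx\,dt = 0.
\end{equation*}

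By density of tensor products and the fundamental lemma of the calculus of variations, this reduces, fiberwise for a.e.\ $(x,t,s)$, to the orthogonality of $V(x,t,\cdot,s) - \nabla v(x,t)$ in $[L^q(\square)]^N$ against every smooth $\square$-periodic divergence-free field. The classical $L^q$-Helmholtz decomposition on the torus then furnishes $z(x,t,\cdot,s) \in W^{1,q}_{\rm per}(\square)/\R$ with $V - \nabla v = \nabla_y z$. To secure joint measurability in $(x,t,s)$, I would realize $z$ concretely as the unique mean-zero weak solution of the parametrized periodic elliptic cell problem $-\Delta_y z = -\dv_y V$ in $\square$; standard $L^q$-elliptic estimates will place $z$ in the announced space $L^q(\Omega\times I; L^q(J; W^{1,q}_{\rm per}(\square)/\R))$. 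The main obstacle I anticipate is this identification step: ensuring the class of admissible divergence-free tensor-product test functions is rich enough to force the $y$-gradient structure on $V - \nabla v$, and then lifting the fiberwise Helmholtz decomposition to joint measurability in all four variables. The first assertion, by contrast, is Nguetseng's theorem in the extended $(x,t)$--$(y,s)$ setting and poses no substantive new difficulty.
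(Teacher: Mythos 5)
Your argument is sound and is exactly the classical Nguetseng--Allaire compactness-plus-identification scheme extended to the $(x,t)$--$(y,s)$ setting: duality, a diagonal extraction over a countable family of admissible tensor test functions, and Riesz representation for the first assertion; divergence-free periodic test fields, the de Rham/Helmholtz decomposition on the torus, and the auxiliary cell problem $-\Delta_y z=-\dv_y V$ to secure joint measurability for the second. The paper gives no proof of its own here --- it simply cites Holmbom \cite{Ho} (Theorems 2.3 and 3.1), whose proofs proceed along the same lines, so your proposal is essentially the same approach as the one the paper relies on.
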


\begin{proof}
See \cite[Theorems 2.3 and 3.1]{Ho}.
\end{proof}

\subsection{Space-time unfolding method}\label{unfolding}

The unfolding method was first introduced in \cite{CDG1} (see \cite{CDG2,CDG3} for more details), and then its space-time version was developed in \cite{AAB17}.
In this subsection, we briefly review the space-time unfolding method. Moreover, we shall prove a relation between the (weak and strong) space-time two-scale convergence and the (weak and strong) convergence of unfolded sequences for later use.

\begin{defi}[Space-time unfolding operator, {\cite[Definition 2.1]{AAB17}}]\label{D:ufop}
For $\e>0$, define the sets $\hat{\Omega}_{\e}\subset \Omega$ and $\hat{I}_{\e}\subset I$ by
\begin{align*}
\hat{\Omega}_{\e}&:=\text{\rm interior}\Bigl( \bigcup_{\xi\in \Xi_{\e}} \e(\xi+\overline{\square})\Bigl),\
\Xi_{\e}:=\{\xi\in \Z^N \colon \e(\xi+\square)\subset \Omega \},\\
\hat{I}_{\e}&:= \{ t\in I\colon \e^r\left( \lfloor\tfrac{t}{\e^r}\rfloor+1\right)\le T\},
\end{align*}
respectively. 
Here $\e(\xi+\overline{\square})$ denotes the cloesd $\e$-cell $[0,\e]^N$ with the origin at $\e\xi\in\e\Z^N$ and $\lfloor\cdot\rfloor$ denotes the floor function {\rm(}i.e.,~$\lfloor\cdot\rfloor$ denotes the integer part of $\cdot${\rm)}.
Set $\Lambda_\e:=(\Omega\times I)\setminus (\hat{\Omega}_{\e}\times \hat{I}_{\e})$.
For $\e>0$, the \emph{space-time unfolding operator} $\ufop: \mfs(\hat{\Omega}_{\e}\times \hat{I}_{\e})\to \mfs(\hat{\Omega}_{\e}\times \hat{I}_{\e}\times\square\times J)$ is defined by
\begin{align*}
\ufop(w)(x,t,y,s)=
\begin{cases}
w(\e\lfloor\mx\rfloor+\e y, \e^r\lfloor\mt\rfloor+\e^r s)\quad  &\text{for a.e.~}(x,t,y,s )\in \hat{\Omega}_{\e}\times \hat{I}_{\e}\times \square\times J,\\
0 &\text{for a.e.~}(x,t)\in
\Lambda_{\e}\times \square\times J,
\end{cases}
\end{align*}
for $w\in\mfs(\hat{\Omega}_{\e}\times \hat{I}_{\e})$.
Moreover, the unfolding operator {\rm (}still denoted by $\ufop${\rm )} can be defined analogously for $W\in \mfs(\hat{\Omega}_{\e}\times \hat{I}_{\e})^N=\mfs(\hat{\Omega}_{\e}\times \hat{I}_{\e};\R^N)$.
\end{defi}

\begin{rmk}\label{R:uf}
\rm
We readily get the following (i)--(iii)\/{\rm :}
\begin{itemize}
\item[(i)]$\ufop(v+w)=\ufop(v)+\ufop(w)$ for all $v,w\in \mfs(\hat{\Omega}_{\e}\times \hat{I}_{\e})$.
\item[(ii)]$\ufop(vw)=\ufop(v)\ufop(w)$ for all $v,w\in \mfs(\hat{\Omega}_{\e}\times \hat{I}_{\e})$.
\item[(iii)]$|\ufop(w)|^\gamma=\ufop(|w|^\gamma)$ for all $w\in \mfs(\hat{\Omega}_{\e}\times \hat{I}_{\e})$ and all $\gamma\in \R$.
\end{itemize}
\end{rmk}

\begin{rmk}\label{R:cng}
\rm
Note that the following property holds\/{\rm :}
$$
\iint_{\hat{\Omega}_\e\times \hat{I}_\e} w(x,t)\, dxdt
=
\int_0^T\int_\Omega\int_0^1\int_\square
\ufop(w)(x,t,y,s)\, dydsdxdt
$$
for $w\in \mfs(\hat{\Omega}_\e\times \hat{I}_\e)$.
Indeed, in Definition \ref{D:ufop}, $\hat{I}_\e$ can be represented as
$$
\hat{I}_\e=\text{interior}\Bigl(\bigcup_{\zeta\in \Theta_{\e}} \e^r(\zeta+\overline{J})\Bigl),\
\Theta_{\e}:=\{\zeta\in \N\cup\{0\} \colon \e^r(\zeta+J)\subset I \}.
$$
Hence it follows that, for all $w\in L^1(\hat{\Omega}_\e\times \hat{I}_\e)$,
\begin{align*}
\iint_{\hat{\Omega}_\e\times \hat{I}_\e} w(x,t)\, dxdt
&=
\sum_{\zeta\in \Theta_\e}\sum_{\xi\in \Xi_\e}
\int_{\e^r(\zeta+J)}\int_{\e(\xi+\square)} w(x,t)\, dxdt\\
&=
\e^{N+r}
\sum_{\zeta\in \Theta_\e}\sum_{\xi\in \Xi_\e}
\int_0^1\int_{\square}
w(\e\xi+\e y,\e^r\zeta+\e^r s)\, dyds\\
&=
\sum_{\zeta\in \Theta_\e}\sum_{\xi\in \Xi_\e}
\iint_{\e^r(\zeta+J)\times J}
\iint_{\e(\xi+\square)\times \square}
w(\e\xi+\e y,\e^r\zeta+\e^r s)\, dydsdxdt\\
&=
\int_0^T\int_\Omega\int_0^1\int_\square
\ufop(w)(x,t,y,s)\, dydsdxdt.
\end{align*}
\end{rmk}

The following lemma is concerned with the relation between
the weak convergence for unfolded sequences and the weak space-time two-scale convergence.
\begin{prop}[cf.~{\cite[Proposition 2.14]{CDG2}}]\label{weak-wtts}
Let $(v_\e)$ be a bounded sequence in $L^q(\Omega\times I)$ and $v\in L^q(\Omega\times I\times \square\times J)$. Then the following assertions are equivalent\/{\rm :}
\begin{itemize}
\item[\rm (i)] $\ufop(v_\e)\to v$ weakly in $L^q(\Omega\times I\times \square\times J)$,
\item[\rm (ii)] $v_\e\wtts v$ in $L^q(\Omega\times I\times \square\times J)$.
\end{itemize}
\end{prop}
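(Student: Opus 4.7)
The strategy reduces both implications to two ingredients: $L^q$-boundedness of $(\ufop(v_\e))$, and strong $L^{q'}$-convergence of the unfolding of periodically oscillating smooth test functions. Applying Remark \ref{R:cng} to $|v_\e|^q$ together with Remark \ref{R:uf}(iii) yields
$$
\|\ufop(v_\e)\|_{L^q(\Omega\times I\times \square\times J)}^q = \iint_{\hat\Omega_\e\times \hat I_\e}|v_\e|^q\, dxdt \le \|v_\e\|_{L^q(\Omega\times I)}^q,
$$
so $(\ufop(v_\e))$ is bounded in $L^q(\Omega\times I\times \square\times J)$ and, by reflexivity, admits weak cluster points.

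Next I would establish a crucial identity for tensor-product test functions $\Psi(x,t,y,s)=\phi(x)\psi(t)b(y)c(s)$ of Definition \ref{D:two}. Using the $(\square\times J)$-periodicity of $b$ and $c$ together with Definition \ref{D:ufop} gives
$$
\ufop\bigl(\Psi(\cdot,\cdot,\mx,\mt)\bigr)(x,t,y,s)= \phi\bigl(\e\lfloor\mx\rfloor+\e y\bigr)\,\psi\bigl(\e^r\lfloor\mt\rfloor+\e^r s\bigr)\,b(y)\,c(s).
$$
Since $|x-(\e\lfloor\mx\rfloor+\e y)|\le \e(1+\sqrt{N})$ and $|t-(\e^r\lfloor\mt\rfloor+\e^r s)|\le 2\e^r$ uniformly on $\hat\Omega_\e\times\hat I_\e\times\square\times J$, and $|\Lambda_\e|\to 0$, the uniform continuity of $\phi,\psi$ gives $\ufop(\Psi(\cdot,\cdot,\mx,\mt)) \to \Psi$ strongly in $L^{q'}(\Omega\times I\times \square\times J)$ as $\e\to 0_+$ (using the zero-extension on $\Lambda_\e$). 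Combining Remarks \ref{R:uf}(ii) and \ref{R:cng} applied to the product $v_\e\,\Psi(\cdot,\cdot,\mx,\mt)$ yields
$$
\int_0^T\!\!\int_\Omega v_\e\,\Psi(x,t,\mx,\mt)\, dxdt = \int_0^T\!\!\int_\Omega\!\!\int_0^1\!\!\int_\square \ufop(v_\e)\,\ufop\bigl(\Psi(\cdot,\cdot,\mx,\mt)\bigr)\, dydsdxdt + R_\e,
$$
where the remainder $R_\e$ is the integral over $\Lambda_\e$, which vanishes by H\"older's inequality, $L^q$-boundedness of $(v_\e)$, boundedness of $\Psi$, and $|\Lambda_\e|\to 0$.

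For (i) $\Rightarrow$ (ii), weak-strong pairing of $\ufop(v_\e)\rightharpoonup v$ in $L^q$ with the strong $L^{q'}$-convergence of $\ufop(\Psi(\cdot,\cdot,\mx,\mt))$, applied to the right-hand side of the identity above, yields $v_\e\wtts v$. For (ii) $\Rightarrow$ (i), take any subsequence of $(\ufop(v_\e))$; by the first paragraph and reflexivity, a further subsequence converges weakly in $L^q(\Omega\times I\times\square\times J)$ to some $\tilde v$. Applying the direction (i) $\Rightarrow$ (ii) along this subsequence together with the hypothesis $v_\e\wtts v$ and density of tensor-product test functions in $L^{q'}(\Omega\times I\times\square\times J)$ forces $\tilde v = v$. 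Since every weak cluster point of $(\ufop(v_\e))$ coincides with $v$, a standard Urysohn-type argument upgrades subsequential convergence to weak convergence of the full sequence.

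\textbf{Main difficulty.} The crux is the strong $L^{q'}$-convergence of the unfolded oscillating test function $\ufop(\Psi(\cdot,\cdot,\mx,\mt))\to \Psi$, which relies both on the $(\square\times J)$-periodicity of $b\otimes c$ (to cancel the integer-part arguments $\lfloor\mx\rfloor,\lfloor\mt\rfloor$) and on the uniform continuity of the slow factors $\phi,\psi$ (to absorb the $O(\e)$ and $O(\e^r)$ perturbations in their arguments). Once this is in hand, the rest consists of routine weak-strong pairings combined with uniqueness of weak two-scale limits.
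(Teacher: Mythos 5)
Your proposal is correct and follows essentially the same route as the paper: boundedness of $\ufop(v_\e)$ via Remark \ref{R:cng}, the identity $\ufop(\Psi_\e)=\Psi(\e\lfloor\mx\rfloor+\e y,\e^r\lfloor\mt\rfloor+\e^r s,y,s)$ from periodicity, convergence of the unfolded test functions by uniform continuity, weak--strong pairing for (i) $\Rightarrow$ (ii), and subsequence extraction plus identification of the limit for (ii) $\Rightarrow$ (i). The only cosmetic difference is that you control the boundary-layer term over $\Lambda_\e$ by a H\"older/measure estimate, whereas the paper takes $\e$ small enough that the compactly supported $\Psi_\e$ vanishes on $\Lambda_\e$.
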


\begin{proof}
For any $\phi\in C^{\infty}_{\rm c}(\Omega)$, $\psi\in C^{\infty}_{\rm c}(I)$, $b\in C^{\infty}_{\rm per}(\square)$ and $c\in C^{\infty}_{\rm per}(J)$, set
$\Psi(x,t,y,s)=\phi(x)\psi(t) b(y)c(s)$ and $\Psi_\e(x,t)=\phi(x)\psi(t) b(\mx) c(\mt)$.
Let $\e>0$ be small enough such that $\Psi_\e=0$ on $\Lambda_\e$.
From the $(\square\times J)$-periodicity of $\Psi$, it holds that
\begin{align*}
\ufop\left(\Psi_\e\right)(x,t,y,s)
&=\Psi\left(\e\lfloor\mx\rfloor+\e y,\e^r\lfloor\mt\rfloor+\e^rs, \lfloor\mx\rfloor+y ,\lfloor\mt\rfloor+s
\right)\\
&=\Psi\left(\e\lfloor\mx\rfloor+\e y, \e^r\lfloor\mt\rfloor+\e^rs, y, s \right).
\end{align*}
Hence the uniformly continuity
of $\Psi$ on $\Omega\times I\times \square\times J$ ensures that
\begin{equation}\label{unifcontitest}
\lim_{\e\to 0_+}|\ufop(\Psi_\e)(x,t,y,s)-\Psi(x,t,y,s)|= 0.
\end{equation}

Assume (i) first. Then we derive by Remarks \ref{R:uf} and \ref{R:cng} and \eqref{unifcontitest} that
\begin{align*}
\int_0^T\int_\Omega v_\e(x,t)\Psi_\e(x,t)\, dxdt
&=
\iint_{\hat{\Omega}_\e\times \hat{I}_\e} v_\e(x,t)\Psi_\e(x,t)\, dxdt\\
&=
\int_0^T\int_\Omega\int_0^1\int_\square
\ufop(v_\e)(x,t,y,s)\ufop(\Psi_\e)(x,t,y,s)\, dydsdxdt\\
&\to
\int_0^T\int_\Omega\int_0^1\int_\square
v(x,t,y,s)\Psi(x,t,y,s)\, dydsdxdt
\end{align*}
as $\e\to 0_+$. Thus (ii) follows.

Next, suppose that (ii) is fulfilled. Noting by Remarks \ref{R:uf} and \ref{R:cng} that
$$
\|\ufop(v_\e)\|_{L^q(\Omega\times I\times \square\times J)}^q
=
\|\ufop(|v_\e|^q)\|_{L^1(\Omega\times I\times \square\times J)}
=
\iint_{\hat{\Omega}_\e\times \hat{I}_\e}|v_\e(x,t)|^q\, dxdt\le C,
$$
one can assume that
$$
\ufnop(v_{\e_n})\to  \tilde{v}\quad \text{ weakly in } L^q(\Omega\times I\times \square\times J)
$$
for some $\tilde{v}\in L^q(\Omega\times I\times \square\times J)$ and $\e_n\to 0_+$.
Then we conclude by Remarks \ref{R:uf} and \ref{R:cng} and \eqref{unifcontitest} that
\begin{align*}
&\int_0^T\int_\Omega\int_0^1\int_\square \tilde{v}(x,t,y,s)\Psi(x,t,y,s)\, dydsdxdt\\
&\qquad=\lim_{\e_n\to 0_+}
\int_0^T\int_\Omega\int_0^1\int_\square \ufnop(v_{\e_n})(x,t,y,s)\ufnop(\Psi_{\e_n})(x,t,y,s)\, dydsdxdt\\
&\qquad=\lim_{\e_n\to 0_+}\Bigl(
\int_0^T\int_{\Omega} v_{\e_n}(x,t)\Psi_{\e_n}(x,t)\, dxdt
-\underbrace{\iint_{\Lambda_{\e_n}} v_{\e_n}(x,t)\Psi_{\e_n}(x,t)\, dxdt}_{= 0\ \text{ by } \Psi_{\e_n}=0  \text{ on } \Lambda_{\e_n}}\Bigl) 
\\
&\qquad=
\int_0^T\int_\Omega\int_0^1\int_\square v(x,t,y,s)\Psi(x,t,y,s)\, dydsdxdt,
\end{align*}
which together with the arbitrariness of the test function $\Psi$
yields (i). This completes the proof.
\end{proof}

Combining Proposition \ref{weak-wtts} with Proposition \ref{multicpt}, we obtain
\begin{prop}[Weak compactness for space-time unfolded sequences]\label{ufcpt}
For any bounded sequence $(v_{\e})$ in $L^q(\Omega\times I)$, there exist a subsequence  $\e_n \to 0_+$  of $(\e)$  and a function $v$\,$\in$\, $L^q(\Omega\times I\times \square\times J ) $ such that
\[
  \mathcal{T}_{\e_n}(v_{\e_n})\to v \quad \text{ weakly in } L^q(\Omega\times I\times \square\times J) .
\]
In addition, assume that $(v_{\e})$ is bounded in $L^{q}(I;W^{1,q}(\Omega))$ and $ v_{\vep_n} \to  v$ strongly in $L^q(\Omega \times I)$ for a limit $v\in L^q(I;W^{1,q}(\Omega))$. Then there exist a {\rm(}not relaveled{\rm )}  subsequence of $\vep_n\to 0_+$ and a function $z\in L^q(\Omega\times I;L^{q}(J; W^{1,q}_{\mathrm{per}}(\square)/\R))$ such that
\begin{align*}
\ufnop(\nabla v_{\e_n}) \to \nabla v+\nabla_y z\quad \text{ weakly in }\ [L^q(\Omega\times I\times \square\times J)]^N.
\end{align*}
\end{prop}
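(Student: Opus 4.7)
The plan is to read Proposition \ref{ufcpt} as a direct corollary of the weak two-scale compactness Proposition \ref{multicpt} combined with the equivalence Proposition \ref{weak-wtts}. No new analysis is required; the only bookkeeping is to make sure the equivalence, stated for scalar sequences, is applied componentwise in the gradient case.

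For the first assertion, given a bounded sequence $(v_{\e})$ in $L^{q}(\Omega \times I)$, I would invoke the first half of Proposition \ref{multicpt} to extract a subsequence $\e_n \to 0_+$ and a limit $v \in L^{q}(\Omega \times I \times \square \times J)$ such that $v_{\e_n} \wtts v$. Applying the implication (ii) $\Rightarrow$ (i) of Proposition \ref{weak-wtts} to this subsequence immediately yields $\ufnop(v_{\e_n}) \to v$ weakly in $L^{q}(\Omega \times I \times \square \times J)$. As an aside, weak precompactness of $(\ufnop(v_{\e_n}))$ in $L^{q}(\Omega \times I \times \square \times J)$ is already guaranteed by the norm identity $\|\ufop(v_\e)\|_{L^q(\Omega\times I\times \square\times J)} = \|v_\e\|_{L^q(\hat{\Omega}_\e\times \hat{I}_\e)}$ coming from Remark \ref{R:cng}, so the role of Proposition \ref{weak-wtts} is only to identify the weak limit.

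For the second assertion, under the additional hypotheses that $(v_\e)$ is bounded in $L^{q}(I; W^{1,q}(\Omega))$ and $v_{\e_n} \to v$ strongly in $L^{q}(\Omega \times I)$ with $v \in L^{q}(I; W^{1,q}(\Omega))$, I would apply the second half of Proposition \ref{multicpt} to the sequence of gradients, extracting a further (not relabeled) subsequence and producing $z \in L^{q}(\Omega \times I; L^{q}(J; W^{1,q}_{\mathrm{per}}(\square)/\R))$ such that $\nabla v_{\e_n} \wtts \nabla v + \nabla_y z$ in $[L^{q}(\Omega \times I \times \square \times J)]^{N}$. Then I would apply Proposition \ref{weak-wtts} componentwise, i.e., to each of the $N$ coordinate sequences $(\partial_{x_k} v_{\e_n})$ in $L^{q}(\Omega \times I)$; since weak convergence in a finite product space is equivalent to componentwise weak convergence, this delivers $\ufnop(\nabla v_{\e_n}) \to \nabla v + \nabla_y z$ weakly in $[L^{q}(\Omega \times I \times \square \times J)]^{N}$.

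There is no real obstacle in this proof: everything reduces to concatenating two results already established in the paper. The only point that deserves a brief remark is the componentwise use of Proposition \ref{weak-wtts} in the gradient case, which is harmless because the unfolding operator was defined to act coordinatewise on vector fields (see the last sentence of Definition \ref{D:ufop}).
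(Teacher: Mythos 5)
Your proof is correct and follows exactly the route the paper intends: the paper derives Proposition \ref{ufcpt} precisely by combining Proposition \ref{multicpt} (weak space-time two-scale compactness) with Proposition \ref{weak-wtts} (equivalence of weak convergence of unfolded sequences and weak two-scale convergence), offering no further argument. Your additional remarks on the norm identity from Remark \ref{R:cng} and the componentwise application to the gradient are harmless clarifications of the same approach.
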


We next introduce the space-time averaging operator.
\begin{defi}[Space-time averaging operator]\label{D:aop}
Under the same assumption as in Definition {\rm\ref{D:ufop}}, the \emph{space-time averaging operator} $\,\aop\colon L^q(\Omega\times I\times \square\times J)\to L^q(\Omega\times I)$ is defined as follows\/{\rm :}
\begin{align*}
\aop(\Psi)(x,t)=
\begin{cases}
\displaystyle
\int_0^1\int_\square\Psi(\e\lfloor \mx\rfloor+\e\sigma,\e^r\lfloor \mt\rfloor+\e^r\rho, \{\mx\},\{\mt\})\, d\sigma d\rho  &\text{for a.e.~}(x,t)\in \hat{\Omega}_{\e}\times \hat{I}_{\e},\vspace{3mm}\\
0 &\text{for a.e.~}(x,t)\in \Lambda_{\e}.
\end{cases}
\end{align*}
Here $\{\cdot\}$ denotes the  fraction part of $\cdot$ {\rm(}i.e., ~$\{\cdot\}:=\cdot-\lfloor \cdot\rfloor${\rm)}.
\end{defi}

\begin{rmk}
\rm 
Note that the space-time averaging operator $\aop$ is also linear by the linearity of the Lebesgue integral.
\end{rmk}

\begin{rmk}[Adjoint of space-time unfolding operator]\label{R:ad}
\rm
In an analogy way of \cite[Section 2.2]{CDG2},
the space-time averaging operator $\,\aop$ can be regarded as the adjoint of $\ufop$. Hence it holds that, for any $\Psi\in L^{q'}(\Omega\times I\times \square\times J)$ and $\psi\in L^q(\Omega\times I)$,
$$
\int_0^T\int_\Omega \aop(\Psi)(x,t)\psi(x,t)\, dxdt=
\int_0^T\int_\Omega\int_0^1\int_\square \Psi(x,t,y,s)\ufop(\psi)(x,t,y,s)\, dydsdxdt.
$$
Furthermore, $\aop$ is almost a left inverse of $\ufop$, that is,
$$
\aop(\ufop(\psi))(x,t)=
\begin{cases}
\psi(x,t)  &\text{for a.e.~}(x,t)\in \hat{\Omega}_{\e}\times \hat{I}_{\e},\vspace{3mm}\\
0 &\text{for a.e.~}(x,t)\in \Lambda_{\e},
\end{cases}
$$
since it folllows from Remarks \ref{R:uf} and \ref{R:cng} that
\begin{align*}
\iint_{\hat{\Omega}_\e\times \hat{I}_\e}\aop(\ufop(\psi))(x,t)\phi(x,t)\, dxdt
&=\int_0^T\int_{\Omega}\int_0^1\int_{\square}
\ufop(\psi)(x,t,y,s)\ufop(\phi)(x,t,y,s)\, dydsdxdt\\
&=\iint_{\hat{\Omega}_\e\times \hat{I}_\e}\psi(x,t)\phi(x,t)\, dxdt
\quad \text{ for all } \phi\in C^{\infty}_{\rm c}(\hat{\Omega}_\e\times \hat{I}_\e).
\end{align*}
\end{rmk}

As for the strong convergence of unfolded sequences and the strong two-scale convergence, the following relation holds.

\begin{prop}\label{equiv-stts}
Let $(v_\e)$ be bounded in $L^q(\Omega\times I)$
and let $v\in L^q(\Omega\times I\times \square\times J)$.
Then the following {\rm(i)}--{\rm (iii)} are equivalent\/{\rm :}
\begin{itemize}
\item[\rm (i)] $\ufop(v_\e)\to v$ strongly in $L^q(\Omega\times I\times \square\times J)$ and $\lim_{\e\to 0_+}\iint_{\Lambda_{\e}}|v_\e(x,t)|^q\, dxdt= 0$,
\item[\rm (ii)]
$v_\e-\aop(v)\to 0$ strongly in $L^q(\Omega\times I)$,
\item[\rm (iii)]  $v_\e\stts v$ in $L^q(\Omega\times I\times \square\times J)$.
\end{itemize}
\end{prop}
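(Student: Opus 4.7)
The plan is to prove (i)$\Leftrightarrow$(iii) and (i)$\Leftrightarrow$(ii) in turn, both based on the elementary isometry
\[
\|\ufop(w)\|_{L^q(\Omega\times I\times \square\times J)}^q = \iint_{\hat{\Omega}_\e\times \hat{I}_\e}|w(x,t)|^q\, dxdt,
\]
which follows at once from item (iii) of Remark~\ref{R:uf} together with Remark~\ref{R:cng}, and which in particular gives the uniform bound $\|\ufop\|\leq 1$. Likewise, Jensen's inequality applied to the probability measure $d\sigma d\rho$ in Definition~\ref{D:aop} yields $\|\aop(\Psi)\|_{L^q(\Omega\times I)} \leq \|\Psi\|_{L^q(\Omega\times I\times \square\times J)}$. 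Together with Propositions~\ref{P:scondi} and~\ref{weak-wtts}, these are essentially all the ingredients needed.

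For (i)$\Leftrightarrow$(iii): Proposition~\ref{weak-wtts} already identifies weak $L^q$-convergence of $\ufop(v_\e)$ with $v_\e\wtts v$, so both (i) and (iii) upgrade the same underlying weak limit to a strong one. The bridge is the isometry rewritten as
\[
\|v_\e\|_{L^q(\Omega\times I)}^q = \|\ufop(v_\e)\|_{L^q(\Omega\times I\times\square\times J)}^q + \iint_{\Lambda_\e}|v_\e|^q\, dxdt.
\]
If (i) holds, the right-hand side converges to $\|v\|_{L^q(\Omega\times I\times\square\times J)}^q$, and Proposition~\ref{P:scondi} delivers (iii). Conversely, assuming (iii), weak lower semicontinuity together with $\|\ufop(v_\e)\|_{L^q}\leq \|v_\e\|_{L^q(\Omega\times I)}$ squeezes $\|\ufop(v_\e)\|_{L^q}$ to $\|v\|_{L^q}$; in the uniformly convex space $L^q$ this promotes weak convergence to strong, and then the same identity forces $\iint_{\Lambda_\e}|v_\e|^q\to 0$.

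For (i)$\Rightarrow$(ii): Remark~\ref{R:ad} gives $\aop(\ufop(v_\e))=v_\e$ on $\hat{\Omega}_\e\times\hat{I}_\e$ and $\aop(v)=0$ on $\Lambda_\e$, so $v_\e-\aop(v)=\aop(\ufop(v_\e)-v)$ on $\hat{\Omega}_\e\times\hat{I}_\e$ and equals $v_\e$ on $\Lambda_\e$; combining $\|\aop\|\leq 1$ with (i) then yields (ii). For (ii)$\Rightarrow$(i), the identity $\aop(v)=0$ on $\Lambda_\e$ immediately gives $\iint_{\Lambda_\e}|v_\e|^q\to 0$, while applying $\ufop$ to $v_\e-\aop(v)$ and invoking the isometry reduces matters to the auxiliary lemma $\ufop(\aop(v))\to v$ strongly in $L^q$. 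The main technical obstacle is this last lemma: I would prove it first for tensor-product smooth $v=\phi(x)\psi(t)b(y)c(s)$ by a direct computation producing an $o(1)$ error controlled by the moduli of continuity of the factors, and then extend to general $v\in L^q(\Omega\times I\times\square\times J)$ by a standard $3\e$-density argument exploiting the uniform bounds $\|\ufop\|,\|\aop\|\leq 1$.
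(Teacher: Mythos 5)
Your argument is correct, and for the equivalence (i)$\Leftrightarrow$(iii) it coincides with the paper's proof: the same isometry identity $\|v_\e\|_{L^q(\Omega\times I)}^q=\|\ufop(v_\e)\|_{L^q(\Omega\times I\times\square\times J)}^q+\iint_{\Lambda_\e}|v_\e|^q\,dxdt$, Proposition \ref{P:scondi} in one direction, and weak lower semicontinuity plus uniform convexity (Radon--Riesz) in the other. Where you genuinely depart from the paper is (i)$\Leftrightarrow$(ii): the paper simply cites \cite[(iv) of Proposition 2.18]{CDG2} and writes nothing, whereas you supply a self-contained proof. Your ingredients there are the right ones --- the contraction bounds $\|\ufop\|\le 1$ and $\|\aop\|\le 1$ (the latter via Jensen, which does hold after the change of variables cell by cell), the left-inverse identity $\aop(\ufop(v_\e))=v_\e$ on $\hat\Omega_\e\times\hat I_\e$ from Remark \ref{R:ad}, the observation that $\aop(v)=0$ on $\Lambda_\e$ so that (ii) automatically controls the $\Lambda_\e$-contribution, and the auxiliary approximation lemma $\ufop(\aop(v))\to v$ strongly in $L^q(\Omega\times I\times\square\times J)$. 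That lemma is exactly the nontrivial content hidden in the citation; your plan (explicit computation showing $\ufop(\aop(v))(x,t,y,s)$ is the macroscopic cell-average of $v(\cdot,\cdot,y,s)$, convergence for tensor products of smooth functions, then a $3\e$-density argument using the uniform operator bounds, plus $|\Lambda_\e|\to 0$ to handle the truncation) is standard and goes through. The trade-off is the usual one: the paper keeps the proof short by outsourcing to the unfolding literature, while your version is longer but makes the proposition independent of \cite{CDG2} and makes visible exactly which structural properties of $\ufop$ and $\aop$ are being used.
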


\begin{proof}
Since the relation (i) $\Leftrightarrow$ (ii) follows from \cite[(iv) of Proposition 2.18]{CDG2}, we shall prove the relation (i) $\Leftrightarrow$ (iii) only. Note by Remarks \ref{R:uf} and \ref{R:cng} that
\begin{align}
\lefteqn{\int_0^T\int_{\Omega} |v_\e(x,t)|^q\, dxdt}\label{keyeq}\\
&\quad =
\int_0^T\int_{\Omega}\int_0^1\int_{\square} |\ufop(v_\e)(x,t,y,s)|^q
\, dydsdxdt + \iint_{\Lambda_\e}|v_\e(x,t)|^q\, dxdt.\nonumber
\end{align}
If (i) holds, then we have
$$
\lim_{\e\to 0_+} \int_0^T\int_{\Omega} |v_\e(x,t)|^q\, dxdt=\int_0^T\int_{\Omega}\int_0^1\int_{\square} |v(x,t,y,s)|^q\, dydsdxdt,
$$
which along with Proposition \ref{P:scondi} yields (iii).

Conversely, if (iii) holds, then it follows that
\begin{equation}\label{errorchk}
\iint_{\Lambda_\e}|v_\e(x,t)|^q\, dxdt
\to  0 \quad  \text{ as } \e\to 0_+.
\end{equation}
Indeed, by \eqref{keyeq}, it holds that
$$
\int_0^T\int_{\Omega} |v_\e(x,t)|^q\, dxdt
\ge
\int_0^T\int_{\Omega}\int_0^1\int_{\square} |\ufop(v_\e)(x,t,y,s)|^q
\, dydsdxdt.
$$
Combining Proposition \ref{P:scondi} with Proposition \ref{weak-wtts} and the weak lower semicontinuity of norm, one can derive by  that
\begin{align}
\label{keyeq2}
\lim_{\e\to 0_+}\int_0^T\int_{\Omega} |v_\e(x,t)|^q\, dxdt
&=
\lim_{\e\to 0_+}\int_0^T\int_{\Omega}\int_0^1\int_{\square}
|\ufop(v_{\e})(x,t,y,s)|^q
\, dydsdxdt\\
&=
\int_0^T\int_{\Omega}\int_0^1\int_{\square} |v(x,t,y,s)|^q\, dydsdxdt,\nonumber
\end{align}
which along with \eqref{keyeq} yields \eqref{errorchk}.
Furthemore, by \eqref{keyeq2} and the uniform convexity of $L^q(\Omega\times I\times \square\times J)$, we obtain (i). This completes the proof.
\end{proof}

\section{Proof of Main results}
Before proving Theorem \ref{T:cor} and Corollary \ref{C:cor}, 
recall by Theorem \ref{T:1} and Proposition \ref{multicpt} that
\begin{equation}\label{eq:grawtts}
a_{\e_n}\nabla v_{\e_n}\wtts a(y,s)(\nabla v_0+\nabla_y z)\quad \text{ in }\ [L^2(\Omega\times I\times\square\times J)]^N,
\end{equation}
where $z=\sum_{k=1}^N(\partial_{x_k}v_0)\Phi_k$ and $\Phi_k$ is the corrector given by \eqref{eq:CP1}--\eqref{eq:CP5}.
\subsection{Proof of Theorem \ref{T:cor}}
We first observe that
\begin{align*}
\lefteqn{
\bigl\|\nabla v_{\e_n}-\nabla v_0-\sum_{k=1}^N\anop(\partial_{x_k}v_0)\,\anop(\nabla_y \Phi_k) \bigl\|_{L^2(\Omega\times I)}}\\
&\quad \le
\bigl\|\nabla v_{\e_n}-\anop(\nabla v_{0})-\anop(\nabla_y z)\bigl\|_{L^2(\Omega\times I)}\\
&\qquad+
\bigl\|\anop(\nabla v_{0})-\nabla v_{0}\bigl\|_{L^2(\Omega\times I)}
+
\bigl\|\anop(\nabla_y z)-\sum_{k=1}^N\anop(\partial_{x_k}v_0)\,\anop(\nabla_y \Phi_k) \bigl\|_{L^2(\Omega\times I)}\\
&\quad =: I_1^{\e_n}+I_2^{\e_n}+I_3^{\e_n}.
\end{align*}
Then we shall estimate the terms $I_1^{\e_n}$, $I_2^{\e_n}$ and $I_3^{\e_n}$ below.

To prove
\begin{equation}
I_1^{\e_n}\to 0\quad \text{ as }\ \e_n\to 0_+,\label{I1est}
\end{equation}
we need the following
\begin{lem}\label{grastts}
Under the same assumption as in Theorem {\rm\ref{T:cor}}, it holds that
$$
\nabla v_{\e_n}\stts \nabla v_{0}+\nabla_y z\quad \text{ in } [L^2(\Omega\times I\times\square\times J)]^N.
$$
\end{lem}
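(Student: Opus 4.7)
\emph{Proof plan.} Since the weak two-scale convergence $\nabla v_{\e_n} \wtts \nabla v_0 + \nabla_y z$ is already granted by Theorem~\ref{T:1} together with Proposition~\ref{multicpt}, Proposition~\ref{P:scondi} reduces the lemma to the norm identity
\[
\lim_{\e_n\to 0_+}\|\nabla v_{\e_n}\|_{L^2(\Omega\times I)}^2 = \|\nabla v_0 + \nabla_y z\|_{L^2(\Omega\times I\times\square\times J)}^2.
\]
In view of the uniform ellipticity \eqref{eq:ellip}, the cleanest route is to prove the sharper weighted energy convergence
\[
\lim_{\e_n\to 0_+}\int_0^T\!\!\int_\Omega a_{\e_n}\nabla v_{\e_n}\cdot\nabla v_{\e_n}\,dxdt
= \int_0^T\!\!\int_\Omega\!\!\int_0^1\!\!\int_\square a(y,s)(\nabla v_0+\nabla_y z)\cdot(\nabla v_0+\nabla_y z)\,dydsdxdt
\]
and then to upgrade it to strong two-scale convergence by an energy expansion with the averaging operator $\anop$.

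\emph{Step 1: limsup via energy identity.} I would test the weak form of \eqref{eq:P} with $v_{\e_n}\in L^2(0,T;H^1_0(\Omega))$ itself and exploit the chain rule $\langle\partial_tu_{\e_n},v_{\e_n}\rangle_{H^1_0(\Omega)} = \tfrac{d}{dt}\tfrac{1}{p+1}\|u_{\e_n}(t)\|_{L^{p+1}(\Omega)}^{p+1}$ to obtain
\[
\int_0^T\!\!\int_\Omega a_{\e_n}\nabla v_{\e_n}\cdot\nabla v_{\e_n}\,dxdt
= \tfrac{\|u^0\|_{L^{p+1}}^{p+1}-\|u_{\e_n}(T)\|_{L^{p+1}}^{p+1}}{p+1}
+\int_0^T\langle f_{\e_n},v_{\e_n}\rangle_{H^1_0(\Omega)}\,dt.
\]
Passing to the limsup uses weak lower semicontinuity in $L^{p+1}(\Omega)$ at $t=T$ (giving $\liminf\|u_{\e_n}(T)\|_{L^{p+1}}^{p+1}\ge\|u_0(T)\|_{L^{p+1}}^{p+1}$) together with $\int_0^T\langle f_{\e_n},v_{\e_n}\rangle\to\int_0^T\langle f,v_0\rangle$. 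The latter follows either from the strong $L^2(0,T;H^{-1})$-assumption on $f_{\e_n}$ paired against the weak $L^2(0,T;H^1_0)$-convergence of $v_{\e_n}$, or from the weak $L^\sigma(0,T;L^{(p+1)/p})$-assumption on $f_{\e_n}$ paired against the strong $L^{p+1}$-convergence of $v_{\e_n}$ inherited from Theorem~\ref{T:1}. Subtracting the analogous identity obtained by testing the homogenized equation \eqref{eq:P0} with $v_0$ yields
\[
\limsup_{\e_n\to 0_+}\int_0^T\!\!\int_\Omega a_{\e_n}\nabla v_{\e_n}\cdot\nabla v_{\e_n}\,dxdt
\le \int_0^T\!\!\int_\Omega j_{\rm hom}\cdot\nabla v_0\,dxdt.
\]

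\emph{Step 2: cell-problem identification.} From the representation $j_{\rm hom}=\langle a(\cdot,\cdot)(\nabla v_0+\nabla_y z)\rangle_{y,s}$ in Theorem~\ref{T:1}, this upper bound equals $\int\!\int\!\int\!\int a(y,s)(\nabla v_0+\nabla_y z)\cdot\nabla v_0\,dydsdxdt$. The key algebraic identity is
\[
\int_0^1\!\int_\square a(y,s)(\nabla v_0+\nabla_y z)\cdot\nabla_y z\,dyds
= \sum_{k,\ell}\partial_{x_k}v_0\,\partial_{x_\ell}v_0\int_0^1\!\int_\square a(\nabla_y\Phi_k+e_k)\cdot\nabla_y\Phi_\ell\,dyds
= 0,
\]
which I would verify in each case by testing the corresponding cell problem with $\Phi_\ell$: in the elliptic cases \eqref{eq:CP1} and \eqref{eq:CP5} it is immediate; in the critical parabolic cases \eqref{eq:CP2} and \eqref{eq:CP4} the test generates a residual proportional to $\int_0^1(\partial_s\Phi_k)\Phi_\ell\,ds$, which by $s$-periodicity and integration by parts is antisymmetric in $(k,\ell)$ and hence annihilated by the symmetric tensor $\partial_{x_k}v_0\,\partial_{x_\ell}v_0$. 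Combined with the liminf bound \eqref{eq:matlsc} of Proposition~\ref{P:two-lsc}, the weighted energy limit is pinned down.

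\emph{Step 3: conclusion via averaging expansion.} I would then expand
\[
\int_0^T\!\!\int_\Omega a_{\e_n}\bigl(\nabla v_{\e_n}-\anop(\nabla v_0)-\anop(\nabla_y z)\bigr)\cdot\bigl(\nabla v_{\e_n}-\anop(\nabla v_0)-\anop(\nabla_y z)\bigr)\,dxdt
\]
into diagonal, cross, and pure-averaging pieces: the diagonal converges to the limit produced in Step~2, while the cross and pure-averaging pieces converge to the same limit by weak/strong two-scale pairing of \eqref{eq:grawtts} with $\anop(\nabla v_0+\nabla_y z)\stts\nabla v_0+\nabla_y z$, via Propositions~\ref{weak-wtts} and~\ref{equiv-stts}. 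Thus the whole expression tends to $0$; uniform ellipticity yields $\nabla v_{\e_n}-\anop(\nabla v_0)-\anop(\nabla_y z)\to 0$ strongly in $[L^2(\Omega\times I)]^N$, and Proposition~\ref{equiv-stts}~(ii)$\Leftrightarrow$(iii) delivers the desired strong two-scale convergence. The main obstacle is Step~2 in the critical case $r=2$, where $\Phi_k$ depends on $(x,t)$ and the cell problem is genuinely parabolic in $(y,s)$; the symmetry/antisymmetry cancellation in the $s$-derivative pairing is the crucial mechanism, and it relies in an essential way on the symmetry of $a(y,s)$.
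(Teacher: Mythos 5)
Your Steps 1 and 2 coincide with the paper's own argument: the same energy identity obtained by testing \eqref{eq:P} with $v_{\e_n}$, the same passage to the limsup via the pointwise-in-time weak $L^{p+1}(\Omega)$ convergence of $u_{\e_n}(T)=v_{\e_n}^{1/p}(T)$ (the paper invokes \cite[Lemma~6.1]{AO1} for this ``for all $t\in\overline{I}$'' convergence --- you should flag that this is a nontrivial input rather than plain lower semicontinuity), the liminf bound \eqref{eq:matlsc}, and the vanishing of the cross term $\int_0^1\int_\square a(y,s)(\nabla v_0+\nabla_y z)\cdot\nabla_y z\,dyds$ via the cell problems. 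One small correction there: in the critical case the cancellation of the $\partial_s$-pairing comes from contracting the antisymmetric quantity $\int_0^1\partial_s\Phi_k\,\Phi_\ell\,ds$ against the \emph{symmetric} tensor $\partial_{x_k}v_0\,\partial_{x_\ell}v_0$ (equivalently, from $\int_0^1\partial_s(w^2)\,ds=0$ with $w=\sum_k\partial_{x_k}v_0\,\Phi_k$ and the $s$-independence of $|u_0|^{1-p}$); the symmetry of $a(y,s)$ is not what drives that step. Where you genuinely diverge is Step 3. The paper stays entirely inside two-scale convergence: it applies \eqref{eq:matlsc} to the still-elliptic matrix $a-\gamma\mathbb{I}$, subtracts the weighted energy limit \eqref{key1}, deduces $\limsup_{\e_n\to0_+}\|\nabla v_{\e_n}\|_{L^2(\Omega\times I)}^2\le\|\nabla v_0+\nabla_y z\|_{L^2(\Omega\times I\times\square\times J)}^2$, and concludes with \eqref{eq:lsc} and Proposition~\ref{P:scondi}. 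Your route --- expanding the weighted energy of $\nabla v_{\e_n}-\anop(\nabla v_0)-\anop(\nabla_y z)$ and using ellipticity plus Proposition~\ref{equiv-stts} --- is also sound, but it requires the additional verifications that $\anop(\nabla v_0+\nabla_y z)\stts\nabla v_0+\nabla_y z$ and that $a_{\e_n}\anop(\nabla v_0+\nabla_y z)\wtts a(y,s)(\nabla v_0+\nabla_y z)$ for the cross and pure-averaging pieces; in effect it imports the unfolding machinery into the lemma, whereas the paper's $\gamma\mathbb{I}$ trick is leaner and defers all averaging-operator work to the subsequent estimates of $I_1^{\e_n}$, $I_2^{\e_n}$, $I_3^{\e_n}$.
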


\begin{proof}[Proof of Lemma \ref{grastts}]
Noting by \cite[Lemma 6.1]{AO1} that
\begin{align*}
  v_{\e_n}(t)^{1/p}\to v_{ 0}(t)^{1/p}\quad \text{ weakly in } L^{p+1}(\Omega)\quad \text{ \underline{for all} }\  t\in \overline{I},
\end{align*}
we deduce that
\begin{align*}
\lefteqn{\limsup_{\e_n\to 0_+}\int_0^T\int_{\Omega}a_{\e_n}\nabla v_{\e_n}\cdot \nabla v_{\e_n}\, dxdt}\\
&\stackrel{\eqref{eq:P}}{=}
\limsup_{\e_n\to 0_+}\left(\int_0^T\int_{\Omega}f_{\e_n}v_{\e_n}\, dxdt-\int_0^T \langle \partial_t v_{\e_n}^{1/p}, v_{\e_n}\rangle_{H^1_0(\Omega)}\, dt\right)\\
&= \limsup_{\e_n\to 0_+}\left[\int_0^T\int_{\Omega}f_{\e_n}v_{\e_n}\, dxdt-\frac{1}{p+1}\left(\|v_{\e_n}^{1/p}(T)\|_{L^{p+1}(\Omega)}^{p+1} - \|u^0\|_{L^{p+1}(\Omega)}^{p+1}\right)\right]\\
&\le \int_0^T\int_{\Omega}fv_{0}\, dxdt - \frac{1}{p+1}\left(\|v_0^{1/p}(T)\|_{L^{p+1}(\Omega)}^{p+1} - \|u^0\|_{L^{p+1}(\Omega)}^{p+1}\right)\\
&= \int_0^T\int_{\Omega}fv_{0}\, dxdt-\int_0^T \langle \partial_t v_{0}^{1/p}, v_{0}\rangle_{H^1_0(\Omega)}\, dt\\
&\stackrel{\eqref{eq:P0}, \eqref{eq:z},\eqref{eq:ahom}}{=}\int_0^T\int_{\Omega} a_{\rm hom}(x,t)\nabla v_0\cdot \nabla v_0\, dxdt.
\end{align*}

On the other hand, \eqref{eq:matlsc} of Propositiion \ref{P:two-lsc}, \eqref{eq:grawtts} and \eqref{eq:ahom} yield that
\begin{align*}
\lefteqn{\liminf_{\e_n\to 0_+}
\int_0^T\int_{\Omega} a_{\e_n}\nabla v_{\e_n}\cdot \nabla v_{\e_n}\, dxdt}\\
&\quad\ge
\int_0^T\int_{\Omega}\int_0^1\int_{\square} a(y,s)(\nabla v_{0}+\nabla_y z)\cdot (\nabla v_{0}+\nabla_y z)\, dydsdxdt\\
&\quad =
\int_0^T\int_{\Omega}a_{\rm hom}(x,t)\nabla v_{0}\cdot \nabla v_{0}\, dxdt\\
&\qquad +
\underbrace{\int_0^T\int_{\Omega}\int_0^1\int_{\square} a(y,s)\left( \nabla v_{0}+\nabla_y z\right)\cdot \nabla_y z\, dydsdxdt.}_{=0\ \text{ by \eqref{eq:CP1}-\eqref{eq:CP5} and the $J$-periodicity of $\Phi_k$ for $r=2$} }
\end{align*}
Thus it follows that
\begin{align}\label{key1}
\lim_{\e_n\to 0_+}\int_0^T\int_{\Omega}a_{\e_n}\nabla v_{\e_n}\cdot \nabla v_{\e_n}\, dxdt=
\int_0^T\int_{\Omega}a_{\rm hom}(x,t)\nabla v_{0}\cdot \nabla v_{0}\, dxdt.
\end{align}

Now, let $\mathbb{I}\in \R^{N\times N}$ be a unit matrix and let $\gamma>0$ be such that $(a(y,s)-\gamma \mathbb{I})\xi\cdot \xi\ge \tilde{\lambda}|\xi|^2 $ for all $\xi\in \R^N$ and some $\tilde{\lambda}>0$.
Then we infer by \eqref{eq:matlsc} of Proposition \ref{P:two-lsc}, \eqref{eq:grawtts} and \eqref{eq:ahom} that
\begin{align*}
\lefteqn{
\liminf_{\e_n\to 0_+}\int_0^T\int_{\Omega}\left(a_{\e_n}-\gamma \mathbb{I}\right)\nabla v_{\e_n}\cdot \nabla v_{\e_n}\, dxdt }\\
&\ge
\int_0^T\int_{\Omega}\int_0^1\int_{\square} \left(a(y,s)-\gamma \mathbb{I}\right)\left( \nabla v_{0}+\nabla_y z\right)\cdot \left( \nabla v_{0}+\nabla_y z\right)\, dydsdxdt\\
&=
\int_0^T\int_{\Omega}a_{\rm hom}(x,t)\nabla v_{0}\cdot \nabla v_{0}\, dxdt\\
&\quad +
\underbrace{\int_0^T\int_{\Omega}\int_0^1\int_{\square} a(y,s)\left( \nabla v_{0}+\nabla_y z\right)\cdot \nabla_y z\, dydsdxdt}_{=0\ \text{ by \eqref{eq:CP1}-\eqref{eq:CP5} and the $J$-periodicity of $\Phi_k$ for $r=2$} }\\
&\quad -
\int_0^T\int_{\Omega}\int_0^1\int_{\square} \gamma \mathbb{I}\left( \nabla v_{0}+\nabla_y z\right)\cdot \left( \nabla v_{0}+\nabla_y z\right)\, dydsdxdt,
\end{align*}
and hence, \eqref{key1} ensures that
\begin{align*}
\lefteqn{
-\limsup_{\e_n\to 0_+}\int_0^T\int_{\Omega}\gamma \mathbb{I}\nabla v_{\e_n}\cdot \nabla v_{\e_n}\, dxdt }\\
&\qquad \ge
-
\int_0^T\int_{\Omega}\int_0^1\int_{\square} \gamma \mathbb{I}\left( \nabla v_{0}+\nabla_y z\right)\cdot \left( \nabla v_{0}+\nabla_y z\right)\, dydsdxdt,
\end{align*}
which together with \eqref{eq:lsc} of Proposition \ref{P:two-lsc} yields that
\begin{equation}\label{chk_stts}
\lim_{\e_n\to 0_+}\|\nabla v_{\e_n}\|_{L^2(\Omega\times I)}^2
=
\|\nabla v_0+\nabla_y z\|_{L^2(\Omega\times I\times \square\times J)}^2.
\end{equation}
Thanks to \eqref{chk_stts} and Proposition \ref{P:scondi}, we get the assertion.
\end{proof}
By virtue of Lemma \ref{grastts}, \eqref{I1est} follows from the implication (iii) $\Rightarrow$ (ii) of Proposition \ref{equiv-stts}.

We next claim that
\begin{equation}\label{I2est}
I_2^{\e_n}\to 0\quad \text{ as }\ \e_n\to 0_+.
\end{equation}
This also follows from the implication (iii) $\Rightarrow$ (ii) of Proposition \ref{equiv-stts}. Indeed, since $\nabla v_0$ is independent of $(y,s)\in \square\times J$, and in particular, $\nabla v_0$ strongly space-time two-scale converges to itself in $[L^2(\Omega\times I\times \square\times J)]^N$, one can get the assertion.

We finally show that
\begin{equation}\label{I3est}
I_3^{\e_n}\to 0\quad \text{ as }\ \e_n\to 0_+.
\end{equation}
It suffices to prove that
\begin{equation}\label{final}
\anop((\partial_{x_k}v_{0})\nabla_y \Phi_k)
-\anop(\partial_{x_k}v_{0})\,
\anop(\nabla_y \Phi_k)\to 0\quad \text{ strongly in }\ [L^2(\Omega\times I)]^N.
\end{equation}
To this end, we use the following fact\/{\rm :}
\begin{equation}\label{phireg}
\nabla_y \Phi_k\in
[L^{ \infty}( \Omega\times I; L^2(\square\times J))]^N,
\end{equation}
which plays a crucial role at the critical case $r=2$ (see \cite[Appendix]{AO1} for the proof).
Since $\partial_{x_k}v_{0}$ is independent of $(y,s)\in \square\times J$,
noting that, for any $(\xi,\zeta)\in \Xi_{\e_n}\times \Theta_{\e_n}$ (see \S \ref{unfolding} for notation), $\aop(\partial_{x_k}v_{0})$ can be regarded as a constant in $\e_n(\xi+\square)\times \e_n^r(\zeta+J)$,
we derive by Remark \ref{R:ad}  and H\"{o}lder's inequality that
\begin{align*}
\lefteqn{
\left\|\anop((\partial_{x_k}v_{0})\nabla_y \Phi_k)
-\anop(\partial_{x_k}v_{0})\,
\anop(\nabla_y \Phi_k)\right\|_{L^2(\Omega\times I)}^2
}\\
&\quad=
\sum_{\zeta\in \Theta_\e}\sum_{\xi\in \Xi_\e}
\int_{\e^r(\zeta+J)}\int_{\e(\xi+\square)}
 \left| \anop\bigl(\bigl(\partial_{x_k}v_{0}-
\anop(\partial_{x_k}v_{0})\bigl)\nabla_y \Phi_k\bigl)\right|^2\, dxdt\\
&\quad \le
\int_0^T\int_{\Omega}\anop(\left|\bigl(\partial_{x_k}v_{0}-
\anop(\partial_{x_k}v_{0})\bigl) \nabla_y \Phi_k\right|^2)\, dxdt\\
&\quad =
\int_0^T\int_{\Omega}\int_0^1\int_{\square} \left|\bigl(\partial_{x_k}v_{0}-
\anop(\partial_{x_k}v_{0})\bigl)\nabla_y \Phi_k\right|^2\ufnop(1)\, dydsdxdt\\
&\quad =
\int_0^T\int_{\Omega}\|\nabla_y \Phi_k(x,t)\sqrt{\ufnop(1)}\|_{L^2(\square\times J)}^2 \bigl(\partial_{x_k}v_{0}-
\anop(\partial_{x_k}v_{0})\bigl)^2\, dxdt\\
&\quad \le
\|\nabla_y\Phi_k\|_{L^{\infty}(\Omega\times I;L^2(\square\times J))}^2\bigl\|\partial_{x_k}v_{0}-
\anop(\partial_{x_k}v_{0})\bigl\|_{L^2(\Omega\times I)}^2
\to 0\quad \text{ as }\ \e_n\to 0_+.
\end{align*}
Here we used the facts $|\ufnop(1)|\le 1$, \eqref{I2est} and \eqref{phireg} in the last line. Thus we obtain \eqref{final}.

Combining \eqref{I1est}, \eqref{I2est} and \eqref{I3est}, we obtain
$$
\bigl\|\nabla v_{\e_n}-\nabla v_0-\sum_{k=1}^N\anop(\partial_{x_k}v_0)\,\anop(\nabla_y \Phi_k)\bigl\|_{L^2(\Omega\times I)}
\le I_1^{\e_n}+ I_2^{\e_n}+ I_3^{\e_n}\to 0 \ \text{ as } \e_n\to 0_+,
$$
which completes the proof of Theorem \ref{T:cor}.

\subsection{Proof of Corollary \ref{C:cor}}
Let $j_{\e_n} := a_{\e_n} \nabla v_{\e_n}$ be the diffusion flux of \eqref{eq:P}.
Thanks to Theorem \ref{T:cor}, it follows that
\begin{align*}
\lefteqn{
\bigl\|
j_{\e_n} - a_{\e_n} \bigl( \nabla v_{0}+\sum_{k=1}^N\,\anop( \partial_{x_k} v_0)\,\anop(\nabla_y \Phi_k)\bigl)
\bigl\|_{L^2(\Omega \times I)}
}\\
&\quad \leq  \bigl\|
\nabla v_{\e_n}- \nabla v_{0} - \sum_{k=1}^N\,\anop( \partial_{x_k} v_0)\,\anop(\nabla_y \Phi_k)
\bigl\|_{L^2(\Omega \times I)} \to 0 \quad\text{ as } \e_n\to 0_+.
\end{align*}
Therefore, although $j_{\e_n} \to j_{\rm hom}$ weakly in $[L^2(\Omega \times I)]^N$ as $\e_n \to 0_+$ (see Theorem \ref{T:1}), one can derive that
\begin{align}\label{eq:crj}
\lim_{\e_n \to 0_+} \bigl\|
j_{\e_n} - j_{\rm hom} - \bigl[ a_{\e_n} \bigl( \nabla v_{0}+\sum_{k=1}^N\,\anop( \partial_{x_k} v_0)\,\anop(\nabla_y \Phi_k) \bigl) - j_{\rm hom} \bigl]
\bigl\|_{L^2(\Omega \times I)} = 0.
\end{align}

Furthermore, noting that
\begin{equation}\label{eq:div-H-1}
\|\mathrm{div} \,\varphi\|_{H^{-1}(\Omega)} = \sup_{\|\nabla w\|_{L^2(\Omega)} = 1} \int_\Omega \varphi \cdot \nabla w \, dx \leq \|\varphi\|_{L^2(\Omega)}
\quad \mbox{ for } \ \varphi \in [L^2(\Omega)]^N,
\end{equation}
we conclude that
\begin{align*}
\lefteqn{
 \bigl\| \partial_t v_{\e_n}^{1/p} - \partial_t v_0^{1/p} - \mathrm{div} \bigl[ a_{\e_n} \Bigl( \nabla v_{0}+\sum_{k=1}^N\,\anop( \partial_{x_k} v_0)\,\anop(\nabla_y \Phi_k) \Bigl) - j_{\rm hom} \bigl] \bigl\|_{L^2(I;H^{-1}(\Omega))}
}\\
&\stackrel{\eqref{eq:P}, \eqref{eq:P0},\eqref{eq:div-H-1}}{\leq} \ \bigl\| j_{\e_n} - j_{\rm hom} - \bigl[ a_{\e_n} \Bigl( \nabla v_{0}+\sum_{k=1}^N\,\anop( \partial_{x_k} v_0)\,\anop(\nabla_y \Phi_k) \Bigl) - j_{\rm hom} \bigl] \bigl\|_{L^2(\Omega\times I)}\\
&\qquad +
\|f_{\e_n}- f\|_{L^2(I;H^{-1}(\Omega))}
\stackrel{\eqref{eq:crj}}{\to} 0
\quad\text{ as } \e_n \to 0_+.
\end{align*}
This completes the proof.

\begin{rmk}\label{R:nvanish}
\rm
We stress that although the corrector term of $j_{\e_n}$ converges to zero weakly in $[L^2(\Omega\times I)]^N$, it does not converge strongly in general. Indeed, suppose that $a(y,s)$ is smooth and $r\neq 2$. Then we observe that
\begin{align}
&a_{\e_n} \Bigl( \nabla v_{0}+\sum_{k=1}^N\,\anop( \partial_{x_k} v_0)\,\anop(\nabla_y \Phi_k) \Bigl) - j_{\rm hom}\label{nvanish}\\
&=
\{a_{\e_n}( \nabla v_{0}+\nabla_y z(x,t,\mnx,\mnt)) - j_{\rm hom}\}
-
\{a_{\e_n}(\nabla_y z(x,t,\mnx,\mnt)-\, \anop (\nabla_yz))\}\nonumber\\
&\quad
- \underbrace{a_{\e_n}\sum_{k=1}^N \Bigl(
\anop\bigl( (\partial_{x_k} v_0)\nabla_y\Phi_k\bigl)- \,\anop( \partial_{x_k} v_0)\,\anop(\nabla_y \Phi_k)  \Bigl)}_{\to\ 0 \text{ strongly in } [L^2(\Omega\times I)]^N \text{ by \eqref{final}}}.\nonumber
\end{align}
Moreover, as for the second term in \eqref{nvanish}, noting by the $(\e_n\square\times \e_n^rJ)$-periodicity of $\Phi_k(\mnx,\mnt)$ that
\begin{align*}
\anop (\nabla_yz)
&=
\sum_{k=1}^N
\left(\int_0^1\int_\square \partial_{x_k}v_0(\e_n\lfloor \mnx\rfloor+\e_n\sigma,\e_n^r\lfloor \mnt\rfloor+\e_b^r\rho)\, d\sigma d\rho\right) \nabla_y\Phi_k(\{\mnx\},\{\mnt\})\\
&=
\sum_{k=1}^N \anop (\partial_{x_k}v_0)\nabla_y\Phi_k(\mnx,\mnt),
\end{align*}
we see by the $(\square\times J)$-periodicity of $\Phi_k$ and \cite{AGK} (i.e.,  $\nabla_y\Phi_k\in [L^{\infty}(\square\times J)]^N$) that
\begin{align*}
\lefteqn{\|a_{\e_n}(\nabla_y z(x,t,\mnx,\mnt)-\, \anop (\nabla_yz))\|_{L^2(\Omega\times I)}}\\
&\le \sum_{k=1}^N \|\nabla_y\Phi_k(\mnx,\mnt)\|_{L^{\infty}(\Omega\times I)}
\|\partial_{x_k}v_0- \anop(\partial_{x_k}v_0)\|_{L^2(\Omega\times I)}\\
&= \sum_{k=1}^N \|\nabla_y\Phi_k\|_{L^{\infty}(\square\times J)}
\underbrace{\|\partial_{x_k}v_0- \anop(\partial_{x_k}v_0)\|_{L^2(\Omega\times I)}}_{\to\ 0 \text{ by \eqref{I2est}} } \to 0\ \text{ as } \e_n\to 0_+.
\end{align*}
However, the first term in \eqref{nvanish} does not strongly converge to zero in general, i.e.,  
$$
a_{\e_n}(\nabla v_0+\nabla_y z(x,t,\mnx,\mnt))-j_{\rm hom} \not\to 0\quad \text{ strongly in }
[L^2(\Omega\times I)]^N
$$
(see \cite[Remark 6.2]{AO1} for more details).
\end{rmk}

\section*{Acknowledgment}
The author is partially supported by Division for Interdisciplinary Advanced Research and Education, Tohoku University and Grant-in-Aid for JSPS Fellows (No.~JP20J10143). He would like to thank Professor Goro Akagi (Tohoku University) who is his supervisor, for many stimulating discussions.

\end{document}